\documentclass[smallextended,final,numbook]{svjour3}     

\newif\iftechreport 
\techreporttrue     

\usepackage{amssymb}
\usepackage{amsmath}
\usepackage[colorlinks=true, linkcolor=blue, citecolor=blue, urlcolor=blue, pdfborder={0 0 0}]{hyperref}
\usepackage[letterpaper,top=2in, bottom=1.5in, left=1in, right=1in]{geometry}
\usepackage{graphicx}
\usepackage{tabularx}
\usepackage{booktabs}       
\usepackage{amsfonts}       
\usepackage{nicefrac}       
\usepackage{microtype}      
\usepackage{color}
\usepackage{algorithm}
\usepackage{algorithmicx}
\usepackage[noend]{algpseudocode}
\usepackage{tabularx}
\usepackage{wrapfig}
\usepackage{caption}
\usepackage{subcaption}
\newcommand*\Let[2]{\State #1 $\gets$ #2}
\algrenewcommand\algorithmicrequire{\textbf{Input:}}
\algrenewcommand\algorithmicensure{\textbf{Input:}}

\usepackage{mathtools}
\usepackage[normalem]{ulem} 



\newcommand{\remove}[1]{{}}



%

%


\newcommand{\cH}{{\mathcal{H}}}


\newcommand{\RR}{\mathbb{R}}
\newcommand{\NN}{\mathbb{N}}

\newcommand{\EE}{\mathbb{E}}


\newcommand{\prox}{\mathbf{prox}}

\DeclareMathOperator*{\argmin}{argmin}

\DeclareMathOperator*{\Min}{minimize}


\newcommand{\bc}{\begin{center}}
\newcommand{\ec}{\end{center}}

\newcommand{\bdm}{\begin{displaymath}}
\newcommand{\edm}{\end{displaymath}}

\newcommand{\beq}{\begin{equation}}
\newcommand{\eeq}{\end{equation}}

\newcommand{\bfl}{\begin{flushleft}}
\newcommand{\efl}{\end{flushleft}}

\newcommand{\bt}{\begin{tabbing}}
\newcommand{\et}{\end{tabbing}}

\newcommand{\beqn}{\begin{align}}
\newcommand{\eeqn}{\end{align}}

\newcommand{\beqs}{\begin{align*}} 
\newcommand{\eeqs}{\end{align*}}  


\newtheorem{assumption}{Assumption}

\newcommand\numberthis{\addtocounter{equation}{1}\tag{\theequation}}

\DeclarePairedDelimiter{\dotp}{\langle}{\rangle}

\usepackage{booktabs}

\usepackage[usenames,dvipsnames]{xcolor}

\definecolor{purple}{rgb}{0.3,0.0,.4}

\def\cut#1{{}}
\smartqed

\title{The Sound of APALM Clapping: Faster Nonsmooth Nonconvex Optimization with Stochastic Asynchronous PALM\thanks{This material is based upon work supported by the National Science Foundation under Award No. 1502405.
}}

\author{Damek Davis \and Brent Edmunds \and Madeleine Udell}

\institute{B. Edmunds \at
              Department of Mathematics, University of California, Los Angeles\\
              Los Angeles, CA 90025, USA\\
              \email{brent.edmunds@math.ucla.edu}\\           
           D. Davis, M. Udell \at
              School of Operations Research and Information Engineering, Cornell University\\
              Ithaca, NY 16850, USA\\
              \email{\{dsd95, mru8\}@cornell.edu}\\
              }

\date{\today}
\journalname{Report} 

\begin{document}

\maketitle
\abstract{
We introduce the Stochastic Asynchronous Proximal Alternating Linearized Minimization (SAPALM) method,
a block coordinate stochastic proximal-gradient method for solving nonconvex, nonsmooth optimization problems. 
SAPALM is the first asynchronous parallel optimization method that provably converges on a large class of nonconvex, nonsmooth problems.  We prove that SAPALM matches the best known rates of convergence --- among 
synchronous or asynchronous methods --- on this problem class.
We provide upper bounds on the number of workers for which we can expect to see a linear speedup, which match the best bounds known for less complex problems, and show that in practice SAPALM achieves this linear speedup. We demonstrate state-of-the-art performance on several matrix factorization problems.

}

\section{Introduction}
Parallel optimization algorithms often feature synchronization steps: all processors 
wait for the last to finish before moving on to the next major iteration.
Unfortunately, the distribution of finish times is heavy tailed. Hence as the number of processors increases, most processors waste most of their time waiting.
A natural solution is to remove any synchronization steps: instead, allow
each idle processor to update the global state of the algorithm and continue, 
ignoring read and write conflicts whenever they occur. Occasionally one processor
will erase the work of another; the hope is that the gain from allowing processors
to work at their own paces offsets the loss from a sloppy division of labor.

These \emph{asynchronous parallel} optimization methods can work quite well in practice, 
but it is difficult to tune their parameters: 
lock-free code is notoriously hard to debug.
For these problems, there is nothing as practical as a good theory, which might 
explain how to set these parameters so as to guarantee convergence.

In this paper, we propose a theoretical framework guaranteeing convergence
of a class of asynchronous algorithms for problems of the form
\begin{align}\label{eq:main_problem}
\Min_{(x_1, \ldots, x_m) \in \cH_1 \times \ldots\times \cH_m} f(x_1, \ldots, x_m) + \sum_{j=1}^m r_j(x_j),
\end{align}
where $f$ is a continuously differentiable ($C^1$) function with an $L$-Lipschitz gradient, each $r_j$ is a lower semicontinuous (not necessarily convex or differentiable) function, and the sets $\cH_j$ are Euclidean spaces (i.e., $\cH_j = \RR^{n_j}$ for some $n_j \in \NN$). 
This problem class includes many (convex and nonconvex) signal recovery problems, matrix factorization problems, and, more generally, any generalized low rank model~\cite{udell2014generalized}. Following terminology from these domains, we view $f$ as a \emph{loss function} and each $r_j$ as a \emph{regularizer}. For example, $f$ might encode
the misfit between the observations and the model, while the 
regularizers $r_j$ encode structural constraints on the model such as sparsity 
or nonnegativity.

Many synchronous parallel algorithms have been proposed to solve~\eqref{eq:main_problem}, including stochastic proximal-gradient and block coordinate descent methods~\cite{wotaoYangYang,bolte2014proximal}. Our asynchronous variants build on these synchronous methods, and in particular on proximal alternating linearized minimization (PALM) \cite{bolte2014proximal}. These asynchronous variants depend on the same parameters as the synchronous methods, such as a step size parameter, but also new ones, such as the maximum allowable
delay. Our contribution here is to provide a convergence theory to guide the choice of 
those parameters within our control (such as the stepsize) in light of those out of our control (such as the maximum delay) to ensure convergence at the rate guaranteed by theory. 
We call this algorithm the Stochastic Asynchronous Proximal Alternating Linearized Minimization method, or SAPALM for short. 

Lock-free optimization is not a new idea. Many of the first theoretical results for such algorithms appear in the textbook~\cite{bertsekas1989parallel}, written over a generation ago. But within the last few years, asynchronous stochastic gradient and block coordinate methods have become newly popular, and enthusiasm in practice has been matched by
progress in theory. Guaranteed convergence for these algorithms 
has been established for convex problems; see, for example, \cite{mania2015perturbed,peng2015arock,recht2011hogwild,liu2014asynchronous,liu2015asynchronous,davis2016smart,6426626}. 

Asynchrony has also been used to speed up algorithms for nonconvex optimization, in particular, for learning deep neural networks~\cite{NIPS20124687} and completing low-rank matrices~\cite{Yun:2014:NNS:2732967.2732973}. In contrast to the convex case, the existing asynchronous convergence theory for nonconvex problems is limited to the following four scenarios: stochastic gradient methods for smooth unconstrained problems~\cite{1104412,lian2015asynchronous}; block coordinate methods for smooth problems with separable, convex constraints~\cite{doi:10.1137/0801036}; block coordinate methods for the general problem~\eqref{eq:main_problem}~\cite{davis2016asynchronous}; and deterministic distributed proximal-gradient methods for smooth nonconvex loss functions with a single nonsmooth, convex regularizer~\cite{hong2014distributed}. A general block-coordinate stochastic gradient method with nonsmooth, nonconvex regularizers is still missing from the theory. We aim to fill this gap.

\paragraph{Contributions.} 
We introduce SAPALM, the first asynchronous parallel optimization method that provably converges for all nonconvex, nonsmooth problems of the form~\eqref{eq:main_problem}. 
SAPALM is a a block coordinate stochastic proximal-gradient method that generalizes the deterministic PALM method of~\cite{davis2016asynchronous,bolte2014proximal}. When applied to problem~\eqref{eq:main_problem}, we prove that SAPALM matches the best, known rates of convergence, due to~\cite{Ghadimi2016} in the case where each $r_j$ is convex and $m=1$:
that is, asynchrony carries no theoretical penalty for convergence speed. 
We test SAPALM on a few example problems and compare to a synchronous implementation,
showing a linear speedup.

\paragraph{Notation.} Let $m \in \NN$ denote the number of coordinate blocks. We let $\cH = \cH_1 \times \ldots \times  \cH_m$. For every $x \in \cH$, each partial gradient $\nabla_j f(x_1, \ldots, x_{j-1},\cdot, x_{j+1}, \ldots, x_m) : \cH_j \rightarrow \cH_j$ is $L_j$-Lipschitz continuous; we let $\underline{L} = \min_j\{L_j\} \leq \max_j\{L_j\} = \overline{L}$. The number $\tau \in \NN$ is the maximum allowable delay. 
Define the aggregate regularizer $r : \cH \rightarrow (-\infty, \infty]$ as 
$r(x) = \sum_{j=1}^m r_j(x_j)$. 
For each $j \in \{1, \ldots, m\}$, $y \in \cH_j$, and $\gamma > 0$, 
define the proximal operator
$$
\prox_{\gamma r_j}(y) : = \argmin_{x_j \in \cH_j} \left\{r_j(x_j) + \frac{1}{2\gamma}\|x_j - y\|^2\right\}
$$
For convex $r_j$, $\prox_{\gamma r_j}(y)$ is uniquely defined, but for nonconvex problems, it is, in general, a set. We make the mild assumption that for all $y \in \cH_j$, we have $\prox_{\gamma r_j}(y) \neq \emptyset$. A slight technicality arises from our ability to choose among multiple elements of $\prox_{\gamma r_j}(y)$, especially in light of the stochastic nature of SAPALM. Thus, for all $y$, $j$ and $\gamma > 0$, we fix an element 
\begin{align}
 \zeta_j(y, \gamma) \in \prox_{\gamma r_j}(y). \label{eq:zeta}
\end{align}
By~\cite[Exercise 14.38]{rockafellar2009variational}, we can assume that $\zeta_j$ is measurable, which enables us to reason with expectations wherever they involve $\zeta_j$.
As shorthand, we use $\prox_{\gamma r_j}(y)$ to denote the (unique) choice $\zeta_j(y, \gamma)$.
For any random variable or vector $X$, we let $\EE_k\left[X \right] = \EE\left[ X\mid x^k, \ldots, x^0, \nu^k, \ldots, \nu^0\right]$ denote the conditional expectation of $X$ with respect to the sigma algebra generated by the history of SAPALM.

\section{Algorithm Description}

Algorithm~\ref{alg:SAPALM-proc} displays the SAPALM method. 

\begin{algorithm}
  \caption{SAPALM [Local view]
    \label{alg:SAPALM-proc}}
  \begin{algorithmic}[1]
    \Require{$x \in \cH$}
    \State All processors in parallel do
    \Loop
      \State Randomly select a coordinate block $j \in \{1, \ldots, m\}$
      \State Read $x$ from shared memory
      \State Compute $g = \nabla_{j} f(x) + \nu_{j}$
      \State Choose stepsize $\gamma_{j} \in \RR_{++}$ \Comment{According to Assumption~\ref{assumption:noiseregimes}}
      \Let{$x_{j}$}{$\prox_{\gamma_{j} r_{j}}(x_{j} - \gamma_{j}g$}) \Comment{According to~\eqref{eq:zeta}}
    \EndLoop
  \end{algorithmic}
\end{algorithm}

We highlight a few features of the algorithm which we discuss in more detail below.
\begin{itemize}
\item \emph{Inconsistent iterates.} Other processors may write updates to $x$ in the time required to read $x$ from memory.
\item \emph{Coordinate blocks.} 
When the coordinate blocks $x_j$ are low dimensional, it reduces the likelihood that one update will be immediately erased by another, simultaneous update.
\item \emph{Noise.} The noise $\nu \in \cH$ is a random variable that we use to model injected noise.
It can be set to 0, or chosen to accelerate each iteration, or to avoid saddle points.
\end{itemize}

Algorithm~\ref{alg:SAPALM-proc} has an equivalent (mathematical) description which
we present in Algorithm~\ref{alg:SAPALM}, using an iteration counter $k$
which is incremented each time a processor completes an update. 
This iteration counter is
\emph{not} required by the processors themselves to compute the updates.

In Algorithm~\ref{alg:SAPALM-proc}, a processor might not have access to the shared-memory's global state, $x^k$, at iteration $k$. Rather, because all processors can continuously update the global state while other processors are reading, local processors might only read the \emph{inconsistently delayed iterate} $x^{k-d_k} = (x_1^{k-d_{k,1}}, \ldots, x_m^{k-d_{k, m}})$,
where the delays $d_k$ are integers less than $\tau$,
and $x^{l} = x^0$ when $l<0$.

\begin{algorithm}
  \caption{SAPALM [Global view]
    \label{alg:SAPALM}}
  \begin{algorithmic}[1]
    \Require{$x^0 \in \cH$}
    \For{$k \in \NN$}
      \State Randomly select a coordinate block $j_k \in \{1, \ldots, m\}$
      \State Read $x^{k-d_k} = (x_1^{k-d_{k,1}}, \ldots, x_m^{k-d_{k, m}})$ from shared memory
      \State Compute $g^k = \nabla_{j_k} f(x^{k-d_k}) + \nu^k_{j_k}$
      \State Choose stepsize $\gamma_{j_k}^k \in \RR_{++}$  \Comment{According to Assumption~\ref{assumption:noiseregimes}}
            \For{$j = 1, \ldots, m$}
      \If{$j = j_k$}
        \Let{$x^{k+1}_{j_k}$}{$\prox_{\gamma_{j_k}^k r_{j_k}}(x_{j_k}^k - \gamma_{j_k}^k g^k$}) \Comment{According to~\eqref{eq:zeta}}
  \Else 
  \Let{$x^{k+1}_j$}{$x_j^k$}
  \EndIf
  \EndFor
    \EndFor
  \end{algorithmic}
\end{algorithm}

\subsection{Assumptions on the Delay, Independence, Variance, and Stepsizes}

\begin{assumption}[Bounded Delay]
There exists some $\tau \in \NN$ such that, for all $k \in \NN$, the sequence of coordinate delays lie within $d_k \in \{0, \ldots, \tau\}^m$.
\end{assumption}

\begin{assumption}[Independence]
The indices $\{j_k\}_{k \in \NN}$ are uniformly distributed and collectively IID. They are independent from the history of the algorithm $x^k, \ldots, x^0, \nu^k, \ldots, \nu^0$ for all $k \in \NN$.
\end{assumption}

We employ two possible restrictions on the noise sequence $\nu^k$ and the sequence of allowable stepsizes $\gamma_j^k$, all of which lead to different convergence rates:
\begin{assumption}[Noise Regimes and Stepsizes]\label{assumption:noiseregimes}
Let $\sigma_k^2 := \EE_k\left[\|\nu_k\|^2\right]$ denote the expected squared norm of the noise, and let $a \in (1, \infty)$. Assume that $\EE_k \left[\nu^k\right] = 0$ and that there is a sequence of weights $\{c_k\}_{k \in \NN}\subseteq [1, \infty)$ such that 
$$
\left(\forall k \in \NN\right), \left(\forall j \in \{1, \ldots, m\}\right) \qquad \gamma_{j}^k := \frac{1}{ac_k(L_{j} + 2L\tau m^{-1/2})}.
$$
which we choose using the following two rules, both of which depend on the growth of $\sigma_k$:\vspace{5pt} 

\noindent\begin{tabularx}{\linewidth}{lcX}
\textbf{Summable.} &$\sum_{k=0}^\infty \sigma_k^2 < \infty$& $\implies c_k \equiv 1$; \\\\
\textbf{$\alpha$-Diminishing.} $(\alpha\in (0, 1))$ &$\sigma_k^2= O((k+1)^{-\alpha})$ & $\implies c_k = \Theta((k+1)^{(1-\alpha)})$.
\end{tabularx}
\end{assumption}

More noise, measured by $\sigma_k$, results in worse convergence rates and stricter requirements regarding which stepsizes can be chosen. We provide two stepsize choices which, depending on the noise regime, interpolate between $\Theta(1)$ and $\Theta(k^{1-\alpha})$ for any $\alpha \in (0, 1)$. Larger stepsizes lead to convergence rates of order $O(k^{-1})$, while smaller ones lead to order $O(k^{-\alpha})$. 

\subsection{Algorithm Features}
\paragraph{Inconsistent Asynchronous Reading.}

SAPALM allows asynchronous access patterns. A processor may, at any time, and without
notifying other processors:
\begin{enumerate}
\item \textbf{Read.} While other processors are writing to shared-memory, read the possibly out-of-sync, delayed coordinates $x_1^{k-d_{k,1}}, \ldots, x_m^{k-d_{k, m}}$.
\item \textbf{Compute.} Locally, compute the partial gradient $\nabla_{j_k} f(x_1^{k-d_{k, 1}}, \ldots, x_m^{k-d_{k,m}})$. 
\item \textbf{Write.} After computing the gradient, replace the $j_k$th coordinate with 
$$
x_{j_k}^{k+1} \in \argmin_{y} r_{j_k}(y) + \dotp{\nabla_{j_k} f(x^{k-d_k}) + \nu_{j_k}^k, y- x_{j_k}^k} + \frac{1}{2\gamma_{j_k}^k} \|y - x_{j_k}^k\|^2.
$$
\end{enumerate}
Uncoordinated access eliminates waiting time for processors, which speeds up computation.
The processors are blissfully ignorant of any conflict between their actions,
and the paradoxes these conflicts entail: 
for example, the states $x_1^{k-d_{k,1}}, \ldots, x_m^{k-d_{k, m}}$ need never have simultaneously existed in memory. 
Although we write the method with a global counter $k$, 
the asynchronous processors need not be aware of it;
and the requirement that the delays $d_k$ remain bounded by $\tau$ does not demand coordination, but rather serves only to define $\tau$.

\paragraph{What Does the Noise Model Capture?}

SAPALM is the first asynchronous PALM algorithm to allow and analyze noisy updates.
The stochastic noise, $\nu^k$, captures three phenomena:
\begin{enumerate}
\item \textbf{Computational Error.} Noise due to random computational error.
\item \textbf{Avoiding Saddles.} Noise deliberately injected for the purpose of avoiding saddles, as in~\cite{ge2015escaping}.
\item \textbf{Stochastic Gradients.} Noise due to stochastic approximations of delayed gradients. 
\end{enumerate}
Of course, the noise model also captures any combination of the above phenomena. The last one is, perhaps, the most interesting: it allows us to prove convergence
for a stochastic- or minibatch-gradient version of APALM, rather than requiring
processors to compute a full (delayed) gradient.
Stochastic gradients can be computed faster than their batch counterparts, 
allowing more frequent updates.

\subsection{SAPALM as an Asynchronous Block Mini-Batch Stochastic Proximal-Gradient Method}\label{eq:SAPALMvariant}

In Algorithm~\ref{alg:SAPALM-proc}, any stochastic estimator $\nabla f(x^{k-d_k};\xi)$ of the gradient may be used, as long as $\EE_k\left[\nabla f(x^{k-d_k};\xi)\right] = \nabla f(x^{k-d_k})$, and $\EE_k\left[\|\nabla f(x^{k-d_k};\xi) - \nabla f(x^{k-d_k})\|^2\right] \leq \sigma^2$. In particular, if Problem~\ref{eq:main_problem} takes the form
\begin{align*}
\Min_{x \in \cH}  \EE_\xi\left[f(x_1, \ldots, x_m;\xi)\right] + \frac{1}{m} \sum_{j=1}^m r_j(x_j),
\end{align*}
then, in Algorithm~\ref{alg:SAPALM}, the stochastic mini-batch estimator $g^k = m_k^{-1}\sum_{i=1}^{m_k}\nabla f(x^{k-d_k};\xi_i),$ where $\xi_i$ are IID, may be used in place of $\nabla f(x^{k-d_k}) + \nu^k$. A quick calculation shows that $\EE_k\left[\|g^k- \nabla f(x^{k-d_k})\|^2\right] = O(m_k^{-1}).$ Thus, any increasing batch size $m_k = \Omega((k+1)^{-\alpha})$, with $\alpha \in (0, 1)$, conforms to Assumption~\ref{assumption:noiseregimes}.

When nonsmooth regularizers are present, all known convergence rate results for nonconvex stochastic gradient algorithms require the use of increasing, rather than fixed, minibatch sizes; see~\cite{Ghadimi2016,wotaoYangYang} for analogous, synchronous algorithms. 

\section{Convergence Theorem}

\paragraph{Measuring Convergence for Nonconvex Problems.} 
For nonconvex problems, it is standard to measure convergence (to a stationary point)
by the \emph{expected violation of stationarity}, 
which for us is the (deterministic) quantity: 
\begin{align*}
S_k &: =  \EE\left[\sum_{j=1}^m \left\|\frac{1}{\gamma_j^k}(w_j^{k} - x_j^{k}) + \nu_k\right\|^2\right]; \\
\text{where} \quad \left(\forall j \in \{1, \ldots, m\}\right) \qquad w^{k}_j &= \prox_{\gamma_{j}^k r_{j}}(x_{j}^k - \gamma_{j}^k(\nabla_{j} f(x^{k-d_k}) +  \nu_j^k)). \numberthis\label{eq:stationarity}
\end{align*}
A reduction to the case $r \equiv 0$ and $d_k = 0$ reveals that $w_j^k - x_j^k + \gamma_j^k\nu^k_j = -\gamma_{j}^k \nabla_j f(x^k)$ and, hence, $S_k = \EE\left[ \|\nabla f(x^k)\|^2\right]$. More generally, $w_j^k - r_j^k  + \gamma_j^k\nu^k_j \in -\gamma_j^k(\partial_L r_j(w_j^{k}) + \nabla_j f(x^{k-d_k}))$ where $\partial_L r_j$ is the \emph{limiting subdifferential} of $r_j$~\cite{rockafellar2009variational} which, if $r_j$ is convex, reduces to the standard convex subdifferential familiar from~\cite{opac-b1104789}. A messy but straightforward calculation shows that our convergence rates for $S_k$ can be converted to convergence rates for elements of $\partial_L r(w^k) + \nabla f(w^k)$.

We present our main convergence theorem now and defer the proof to Section~\ref{sec:convergence_analysis}.
\begin{theorem}[SAPALM Convergence Rates]\label{thm:convergencerate}
Let $\{x^k\}_{k \in \NN} \subseteq \cH$ be the SAPALM sequence created by Algorithm~\ref{alg:SAPALM}. 
Then, under Assumption~\ref{assumption:noiseregimes} the following convergence rates hold: for all $T \in \NN$, if $\{\nu^k\}_{k \in \NN}$ is
\begin{enumerate}
\item \textbf{Summable}, then 
$$
\min_{k = 0, \ldots, T} S_k \leq \EE_{k \sim P_T}\left[S_k\right] = O\left(\frac{m(\overline{L} + 2L\tau m^{-1/2})}{T+1}\right);
$$
\item \textbf{$\alpha$-Diminishing}, then 
$$
\min_{k = 0, \ldots, T} S_k \leq \EE_{k \sim P_T}\left[S_k\right]= O\left(\frac{m(\overline{L} + 2L\tau m^{-1/2}) + m\log(T+1)}{ (T+1)^{-\alpha}}\right);
$$
\end{enumerate}
where, for all $T \in \NN$, $P_T$ is the distribution $\{0, \ldots, T\}$ such that $P_T(X = k) \propto c_k^{-1}$.
\end{theorem}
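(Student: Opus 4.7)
The plan is to derive a one-step descent inequality for a Lyapunov function and then telescope it from $k=0$ to $T$. I would begin with the two classical ingredients of proximal-gradient analysis applied to the single block $j_k$ updated at step $k$: (i) the block descent lemma from the $L_{j_k}$-Lipschitz continuity of $\nabla_{j_k} f$, giving
$$
f(x^{k+1}) - f(x^k) \leq \langle \nabla_{j_k} f(x^k),\, x^{k+1}_{j_k}-x^k_{j_k}\rangle + \frac{L_{j_k}}{2}\|x^{k+1}-x^k\|^2;
$$
and (ii) the optimality of the prox step tested at $x^k_{j_k}$, giving
$$
r_{j_k}(x^{k+1}_{j_k}) - r_{j_k}(x^k_{j_k}) \leq -\langle g^k,\, x^{k+1}_{j_k}-x^k_{j_k}\rangle - \frac{1}{2\gamma_{j_k}^k}\|x^{k+1}-x^k\|^2.
$$
Summing these and substituting $g^k = \nabla_{j_k} f(x^{k-d_k}) + \nu_{j_k}^k$ splits the cross-term into a delay error $\langle \nabla_{j_k} f(x^k) - \nabla_{j_k} f(x^{k-d_k}),\, x^{k+1}_{j_k}-x^k_{j_k}\rangle$ and a stochastic error $-\langle \nu_{j_k}^k,\, x^{k+1}_{j_k}-x^k_{j_k}\rangle$.

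Next I would handle the delay error using Young's inequality together with the global $L$-Lipschitz bound $\|\nabla_{j_k} f(x^k)-\nabla_{j_k} f(x^{k-d_k})\| \leq L\|x^k-x^{k-d_k}\|$, and then expand the right-hand side into a sum of past one-step increments: since only one coordinate changes per step, $\|x^k-x^{k-d_k}\|^2 \leq \tau \sum_{\ell=k-\tau}^{k-1}\|x^{\ell+1}-x^\ell\|^2$. Past-difference terms of this form are absorbed by working with a Lyapunov function of the shape
\begin{align*}
\Phi^k := f(x^k) + r(x^k) + \sum_{i=1}^\tau \beta_i\,\|x^{k-i+1}-x^{k-i}\|^2,
\end{align*}
for coefficients $\beta_i > 0$ tuned so that the telescoping pieces in $\Phi^{k+1}-\Phi^k$ cancel the delay error. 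The stepsize rule $\gamma_j^k = 1/(ac_k(L_j + 2L\tau m^{-1/2}))$ is calibrated precisely so that after this absorption the net coefficient on $\|x^{k+1}-x^k\|^2$ remains negative. I would likewise Young-split the stochastic cross-term, $-\langle \nu_{j_k}^k,\, x^{k+1}_{j_k}-x^k_{j_k}\rangle \leq \frac{\eta}{2}\|x^{k+1}-x^k\|^2 + \frac{1}{2\eta}\|\nu_{j_k}^k\|^2$, with $\eta = \Theta(1/\gamma_{j_k}^k)$, leaving a noise residual of size $\gamma_{j_k}^k\|\nu_{j_k}^k\|^2$.

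Now I take $\EE_k[\cdot]$. The IID uniform block index contributes a $1/m$ averaging that converts the single-block magnitude $\|x^{k+1}_{j_k}-x^k_{j_k}\|^2$ into $(1/m)\sum_j \|w_j^k-x_j^k\|^2$ with $w^k$ the full prox step of~\eqref{eq:stationarity}; rearranging the defining relation for $w^k$ lets me bound this from below by a constant multiple of $S_k$, up to noise terms controlled by $\EE_k[\|\nu^k\|^2] = \sigma_k^2$. Collecting everything produces a recursion of the form
\begin{align*}
\EE\,\Phi^{k+1} - \EE\,\Phi^k \leq -\frac{C}{m c_k(\overline{L} + 2L\tau m^{-1/2})}\,S_k + \frac{C'}{c_k(\overline{L} + 2L\tau m^{-1/2})}\,\sigma_k^2.
\end{align*}
Telescoping from $0$ to $T$, using $\inf(f+r) > -\infty$, and dividing by $\sum_{k=0}^T c_k^{-1}$ bounds $\EE_{k\sim P_T}[S_k]$ (which dominates $\min_k S_k$ trivially). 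In the Summable regime $c_k\equiv 1$ gives $\sum c_k^{-1} = T+1$ and a finite noise sum, producing the $O(1/(T+1))$ rate. In the $\alpha$-Diminishing regime with $c_k = \Theta((k+1)^{1-\alpha})$ one checks $\sum c_k^{-1} = \Theta((T+1)^\alpha)$ and $\sum c_k^{-1}\sigma_k^2 = \sum \Theta((k+1)^{-1}) = \Theta(\log(T+1))$ (the harmonic borderline), delivering the second rate.

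The main obstacle is the Lyapunov construction, i.e., choosing the weights $\beta_i$ so that the delay errors from Young's inequality are absorbed exactly by the telescoping terms in $\Phi^{k+1}-\Phi^k$ while retaining a strictly negative coefficient on $\|x^{k+1}-x^k\|^2$. The specific constant $2L\tau m^{-1/2}$ in $\gamma_j^k$ is forced by this calibration — the $1/\sqrt{m}$ factor arises because the delay error is spread across $m$ coordinates but only one is updated per step, so a Cauchy–Schwarz-type averaging improves the naive $L\tau$ to $L\tau/\sqrt{m}$ — which means that careful bookkeeping of how every factor of $m$, $\tau$, and $L$ enters is where the proof can go wrong. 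Everything else is routine telescoping and asymptotic evaluation of sums against the $c_k$ schedule prescribed by Assumption~\ref{assumption:noiseregimes}.
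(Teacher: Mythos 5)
Your proposal is correct and follows essentially the same route as the paper: a block descent lemma plus prox optimality, a Lyapunov function augmented with weighted past differences $\beta_i\|x^{k-i+1}-x^{k-i}\|^2$ (the paper's weights are $\beta_i = \tfrac{L}{2\sqrt{m}}(\tau-i+1)$) to absorb the delay error, Young splittings for the delay and noise cross-terms, conditional expectation over the uniform block index, and telescoping against the $c_k$ schedule with the harmonic borderline giving the $\log(T+1)$ factor. The only detail worth noting is that the paper obtains the $L\tau m^{-1/2}$ constant by an asymmetric Young weighting of $\tfrac{1}{m}\langle\nabla f(x^k)-\nabla f(x^{k-d_k}),w^k-x^k\rangle$ into $\tfrac{L}{2\sqrt{m}\tau}\|x^k-x^{k-d_k}\|^2+\tfrac{L\tau}{2m^{3/2}}\|w^k-x^k\|^2$, which is exactly the calibration you describe.
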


\paragraph{Effects of Delay and Linear Speedups.}

The $m^{-1/2}$ term in the convergence rates presented in Theorem~\ref{thm:convergencerate} prevents the delay $\tau$ from dominating our rates of convergence. In particular, as long as $\tau = O(\sqrt{m})$, the convergence rate in the synchronous ($\tau = 0$) and asynchronous cases are within a small constant factor of each other. In that case, because the work per iteration in the synchronous and asynchronous versions of SAPALM is the same, we expect a linear speedup: SAPALM with $p$ processors will converge nearly $p$ times faster than PALM, since the iteration counter will be updated $p$ times as often. 
As a rule of thumb, $\tau$ is roughly proportional to the number of processors. 
Hence we can achieve a linear speedup on as many as $O(\sqrt{m})$ processors.

\subsection{The Asynchronous Stochastic Block Gradient Method}

If the regularizer $r$ is identically zero, then the noise $\nu^k$ need not vanish 
in the limit. The following theorem guarantees convergence of 
asynchronous stochastic block gradient descent with a constant minibatch size.
See the \iftechreport appendix \else supplemental material \fi for a proof.

\begin{theorem}[SAPALM Convergence Rates ($r \equiv 0$)]\label{thm:convergencerate_smooth}
Let $\{x^k\}_{k \in \NN} \subseteq \cH$ be the SAPALM sequence created by Algorithm~\ref{alg:SAPALM} in the case that $r \equiv 0$. If, for all $k \in \NN$, $\{\EE_k\left[\|\nu^k\|^2\right]\}_{k \in \NN}$ is bounded (not necessarily diminishing) and
$$
\left(\exists a \in (1, \infty)\right),\left(\forall k \in \NN\right), \left(\forall j \in \{1, \ldots, m\}\right) \qquad \gamma_{j}^k := \frac{1}{a\sqrt{k}(L_{j} + 2M\tau m^{-1/2})},
$$
then for all $T \in \NN$, we have
 $$\min_{k = 0, \ldots, T} S_k \leq \EE_{k \sim P_T}\left[S_k\right] = O\left(\frac{m(\overline{L} + 2L\tau m^{-1/2}) + m\log(T+1)}{\sqrt{T+1}}\right),$$ 
 where $P_T$ is the distribution $\{0, \ldots, T\}$ such that $P_T(X = k) \propto k^{-1/2}$.
\end{theorem}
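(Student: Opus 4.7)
The plan is to parallel the proof of Theorem~\ref{thm:convergencerate}, exploiting two simplifications available when $r\equiv 0$: the SAPALM update reduces to a plain block gradient step $x_{j_k}^{k+1}=x_{j_k}^k-\gamma_{j_k}^k g^k$, and the stationarity measure simplifies to $S_k=\EE\|\nabla f(x^{k-d_k})\|^2$ since in this case $w_j^k-x_j^k=-\gamma_j^k(\nabla_j f(x^{k-d_k})+\nu_j^k)$. The descent lemma applied blockwise with constant $L_{j_k}$ gives
$$
f(x^{k+1})\le f(x^k)-\gamma_{j_k}^k\langle\nabla_{j_k}f(x^k),g^k\rangle+\tfrac{L_{j_k}}{2}(\gamma_{j_k}^k)^2\|g^k\|^2.
$$
Taking conditional expectation $\EE_k[\cdot]$ eliminates the cross term in $\nu^k$ (since $\EE_k[\nu^k]=0$), and averaging over the uniform block choice $j_k$ produces a factor $1/m$. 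Using the identity $2\langle a,b\rangle=\|a\|^2+\|b\|^2-\|a-b\|^2$ then yields an inequality whose dominant progress term is $-\tfrac{\gamma_k'}{2m}\|\nabla f(x^k)\|^2$ with $\gamma_k'=\Theta(1/\sqrt{k})$.

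Next I would control the delay-induced error $\|\nabla f(x^k)-\nabla f(x^{k-d_k})\|^2\le L^2\tau\sum_{i=k-\tau}^{k-1}\|x^{i+1}-x^i\|^2$ via $L$-Lipschitz continuity and Cauchy--Schwarz. Summing the per-iteration inequality from $k=0$ to $T$ and reindexing this double sum lets one absorb the delay contribution into the progress term, provided the stepsize denominator carries the factor $L_j+2L\tau m^{-1/2}$, which is exactly the form prescribed in the theorem (the symbol $M$ in the stepsize formula playing the role of $L$). Crucially, the noise contribution has expectation of order $(\gamma_k')^2\EE_k\|\nu^k\|^2\sim 1/k$ under the assumed boundedness of $\sigma_k^2$, so $\sum_{k=0}^T(\gamma_k')^2=O(\log T)$. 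This $\log$ factor, rather than summability, is the structural reason that with $r\equiv 0$ one can tolerate bounded, non-diminishing noise.

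The remaining step is arithmetic. Telescoping in $f$ gives
$$
\sum_{k=0}^T\gamma_k'\,\EE\|\nabla f(x^k)\|^2\le 2m\bigl(f(x^0)-\inf f\bigr)+C\sum_{k=0}^T(\gamma_k')^2,
$$
and since $\gamma_k'=\Theta(1/\sqrt{k})$ the total weight $\sum_{k=0}^T\gamma_k'=\Theta(\sqrt{T+1})$ while the right-hand side is of order $m(\overline{L}+L\tau m^{-1/2})+m\log(T+1)$ after reincorporating the constants from the stepsize denominator. Dividing yields
$$
\EE_{k\sim P_T}[S_k]=O\!\left(\frac{m(\overline{L}+L\tau m^{-1/2})+m\log(T+1)}{\sqrt{T+1}}\right)
$$
with weighting $P_T(k)\propto k^{-1/2}$, matching the stated rate. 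Replacing $\|\nabla f(x^k)\|^2$ by $S_k=\EE\|\nabla f(x^{k-d_k})\|^2$ costs only a constant factor because one can re-index the sum or bound $\|\nabla f(x^{k-d_k})-\nabla f(x^k)\|^2$ by the same delay estimate already used above.

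The main obstacle is bookkeeping for the delay-induced telescoping sum when $\gamma_k'$ is time-varying: the reindexing $i\mapsto k$ of $\sum_k\gamma_k'\sum_{i=k-\tau}^{k-1}\|x^{i+1}-x^i\|^2$ introduces ratios $\gamma_k'/\gamma_i'$ that must be uniformly bounded. With $\gamma_k'=\Theta(1/\sqrt{k})$ one has $\gamma_k'/\gamma_{k-\tau}'=\sqrt{(k-\tau)/k}\le 1$, so for $k>\tau$ the ratios are harmless, and a small, separate argument handles the first $\tau$ iterates. Once this monotonicity is in hand, the delay term absorbs cleanly and all remaining manipulations are routine series estimates for $\sum 1/\sqrt{k}$ and $\sum 1/k$.
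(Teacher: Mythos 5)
Your proposal is correct and follows essentially the same route as the paper: the paper's proof rests on a modified supermartingale inequality (Lemma~\ref{lem:lyapunovdecreasesmoothonly}) whose key feature is exactly the one you identify---when $r\equiv 0$ the noise variance enters multiplied by $(\gamma_j^k)^2$ rather than $\gamma_j^k$, so bounded noise contributes only $O(\log T)$ after summing against $\gamma_k\sim k^{-1/2}$---and then repeats the telescoping argument of Theorem~\ref{thm:convergencerate}. The only cosmetic difference is that the paper absorbs the delay reindexing into the Lyapunov function $\Phi$, whose history terms carry constant (stepsize-independent) weights, which sidesteps the $\gamma_k'/\gamma_i'$ ratio bookkeeping that you correctly handle via monotonicity of the stepsizes.
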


\section{Convergence Analysis}\label{sec:convergence_analysis}

\subsection{The Asynchronous Lyapunov Function}

Key to the convergence of SAPALM is the following \textit{Lyapunov function}, defined on $\cH^{1+\tau}$, which aggregates not only the current state of the algorithm, as is common in synchronous algorithms, but also the history of the algorithm over the delayed time steps: $\left(\forall x(0), x(1), \ldots, x(\tau) \in \cH\right)$
\begin{align*}
& \Phi(x(0), x(1), \ldots, x(\tau)) = f(x(0)) + r(x(0)) + \frac{L}{2\sqrt{m}}\sum_{h=1}^\tau (\tau - h + 1)\|x(h) - x(h-1)\|^2. \end{align*}
This Lyapunov function appears in our convergence analysis through the following inequality, which is proved in the  \iftechreport appendix. \else supplemental material. \fi 
	
\begin{lemma}[Lyapunov Function Supermartingale Inequality]\label{lem:lyapunovdecrease}
For all $k \in \NN$, let $z^{k} = (x^k, \ldots, x^{k-\tau}) \in \cH^{1+\tau}$. Then for all $\epsilon > 0$, we have
\begin{align*}
\EE_k\left[\Phi(z^{k+1})\right] &\leq \Phi(z^k) - \frac{1}{2m} \sum_{j=1}^m \left(\frac{1}{\gamma_{j}^k} - (1+\epsilon)\left(L_{j} + \frac{2L\tau}{m^{1/2}}\right)\right) \EE_k\left[\|w_j^k - x_j^k + \gamma_j^k\nu_j^k\|^2\right] \\
&\qquad+ \sum_{j=1}^m\frac{\gamma_j^k\left(1 + \gamma_j^k(1+\epsilon^{-1}) \left(L_{j} + 2L\tau m^{-1/2}\right)\right)\EE_k \left[\|\nu_j^k\|^2\right]}{2m}
\end{align*}
where for all  $j \in \{1, \ldots, m\}$, we have $w^{k}_j = \prox_{\gamma_{j}^k r_{j}}(x_{j}^k - \gamma_{j}^k(\nabla_{j} f(x^{k-d_k}) +  \nu_j^k))$. In particular, for $\sigma_k = 0$, we can take $\epsilon = 0$ and assume the last line is zero.
\end{lemma}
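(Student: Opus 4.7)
The plan is to combine three ingredients: a block-coordinate descent estimate for $F := f + r$ driven by the active index $j_k$, the exact telescoping evolution of the delay-history tail of $\Phi$, and Young's inequality (with parameter $\epsilon$) to separate the asynchrony mismatch and the injected noise from the proximal update magnitude.

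First, conditioning on $j_k$, I would combine the descent lemma for $f$ in block $j_k$ with the optimality of $w_{j_k}^k$ as the proximal minimizer (tested against $y = x_{j_k}^k$), which yields
\begin{align*}
F(x^{k+1}) - F(x^k) \le \langle \nabla_{j_k} f(x^k) - g^k,\, w_{j_k}^k - x_{j_k}^k\rangle + \tfrac{1}{2}\bigl(L_{j_k} - 1/\gamma_{j_k}^k\bigr) \|w_{j_k}^k - x_{j_k}^k\|^2,
\end{align*}
where $g^k = \nabla_{j_k} f(x^{k-d_k}) + \nu_{j_k}^k$, so the inner product splits as an asynchrony mismatch plus a $-\nu_{j_k}^k$ piece. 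Separately, re-indexing $h \mapsto h+1$ in the sum defining $\Phi(z^{k+1})$ causes the triangular weights $(\tau - h + 1)$ to telescope, yielding
\begin{align*}
\Phi(z^{k+1}) - \Phi(z^k) - \bigl[F(x^{k+1}) - F(x^k)\bigr] = \tfrac{L\tau}{2\sqrt{m}}\|x^{k+1} - x^k\|^2 - \tfrac{L}{2\sqrt{m}}\sum_{h=1}^\tau \|x^{k+1-h} - x^{k-h}\|^2,
\end{align*}
and writing $x^k - x^{k-d_k}$ as at most $\tau$ per-coordinate increments and invoking Cauchy--Schwarz together with Lipschitzness of $\nabla f$ gives $\|\nabla f(x^k) - \nabla f(x^{k-d_k})\|^2 \le L^2 \tau \sum_{h=1}^\tau \|x^{k+1-h} - x^{k-h}\|^2$, which has exactly the structure of the negative tail sum.

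Next I would take $\EE_k$ (averaging over $j_k$ with weight $1/m$), apply Young's inequality to the asynchrony inner product with parameter proportional to $\sqrt{m}/(L\tau)$ so that after summing over coordinates the squared asynchrony error is fully absorbed by the negative tail sum, and handle the noise contribution by substituting $w_j^k - x_j^k = v_j^k - \gamma_j^k \nu_j^k$, where $v_j^k := w_j^k - x_j^k + \gamma_j^k \nu_j^k$. This re-expresses $\|w_j^k - x_j^k\|^2$ as $\|v_j^k\|^2$ up to a $(\gamma_j^k)^2 \|\nu_j^k\|^2$ remainder, handled by a second Young application tuned to the same $\epsilon$; the cross term linear in $\nu_j^k$ vanishes by $\EE_k[\nu^k] = 0$. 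Collecting contributions produces the stated bound, and the case $\sigma_k = 0$ is immediate by setting $\nu \equiv 0$ and $\epsilon = 0$.

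The main obstacle is the bookkeeping required to land precisely $2L\tau m^{-1/2}$ (rather than $L\tau m^{-1/2}$) inside the coefficient on $\|v_j^k\|^2$: the quantity $L\tau/\sqrt{m}$ must appear twice on the ``negative'' side, once from the leading $\tau\|x^{k+1} - x^k\|^2$ term in the tail change (which averages to $\tfrac{L\tau}{2m\sqrt{m}}\sum_j \|w_j^k - x_j^k\|^2$) and once from the Young split whose scale is pinned down by the asynchrony-versus-tail cancellation. The two Young parameters must be coordinated so that the outside $(1+\epsilon)$ cleanly multiplies the combination $L_j + 2L\tau m^{-1/2}$ as a unit and the companion $(1+\epsilon^{-1})$ multiplies the $(\gamma_j^k)^2\|\nu_j^k\|^2$ remainder; the $\sqrt{m}$ in the Lyapunov weight is precisely the scaling that enables this cancellation and the linear speedup regime $\tau = O(\sqrt{m})$.
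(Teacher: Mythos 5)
Your outline follows the paper's own route almost step for step: descent lemma plus prox optimality tested at $y = x_{j_k}^k$, exact telescoping of the weighted tail of $\Phi$, Jensen/Cauchy--Schwarz to bound $\|x^k - x^{k-d_k}\|^2$ by at most $\tau$ per-block increments, and two coordinated Young inequalities --- one with parameter $L\tau/\sqrt{m}$ to cancel the asynchrony error against the negative tail sum, one with parameter $\epsilon$ to convert $\|w_j^k - x_j^k\|^2$ into $\|w_j^k - x_j^k + \gamma_j^k\nu_j^k\|^2$. You also correctly identify the subtle point that $2L\tau m^{-1/2}$ arises from two separate contributions of $L\tau m^{-1/2}$, one from $\EE_k\|x^{k+1}-x^k\|^2 = m^{-1}\sum_j\|w_j^k - x_j^k\|^2$ in the tail update and one from the Young split of the asynchrony inner product. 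All of this is sound and matches the paper.

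There is, however, one step that fails as written: you claim that ``the cross term linear in $\nu_j^k$ vanishes by $\EE_k[\nu^k]=0$.'' The inner product in your block descent estimate contains $-\langle \nu_{j_k}^k,\, w_{j_k}^k - x_{j_k}^k\rangle$, and $w_{j_k}^k$ is itself a function of $\nu_{j_k}^k$ (it is the prox evaluated at $x_{j_k}^k - \gamma_{j_k}^k(\nabla_{j_k}f(x^{k-d_k}) + \nu_{j_k}^k)$), so this term does \emph{not} have zero conditional expectation; for instance when $r_j \equiv 0$ it equals $\gamma_j^k\|\nu_j^k\|^2$ in expectation. Zero-mean noise only kills cross terms against history-measurable quantities such as $\nabla_j f(x^{k-d_k})$. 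The correct treatment, which the paper uses, is to absorb this cross term by completing the square against the $-\tfrac{1}{2\gamma_{j_k}^k}\|w_{j_k}^k - x_{j_k}^k\|^2$ term coming from the prox inequality:
\begin{align*}
-\langle \nu_j^k, w_j^k - x_j^k\rangle - \tfrac{1}{2\gamma_j^k}\|w_j^k - x_j^k\|^2 = -\tfrac{1}{2\gamma_j^k}\|w_j^k - x_j^k + \gamma_j^k\nu_j^k\|^2 + \tfrac{\gamma_j^k}{2}\|\nu_j^k\|^2.
\end{align*}
This identity is exactly what produces the leading ``$1$'' inside the parentheses of the noise coefficient $\gamma_j^k\bigl(1 + \gamma_j^k(1+\epsilon^{-1})(L_j + 2L\tau m^{-1/2})\bigr)/(2m)$; if you drop the cross term as having zero mean, you lose that contribution and the resulting ``bound'' is both unjustified and strictly stronger than the lemma. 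Replace the expectation argument with this completion of the square and the rest of your plan goes through.
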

Notice that if $\sigma_k = \epsilon = 0$ and $\gamma_j^k$ is chosen as suggested in Algorithm~\ref{alg:SAPALM}, the (conditional) expected value of the Lyapunov function is strictly decreasing. If $\sigma_k$ is nonzero, the factor $\epsilon$ will be used in concert with the stepsize $\gamma_j^k$ to ensure that noise does not cause the algorithm to diverge. 

\subsection{Proof of Theorem~\ref{thm:convergencerate}}

For either noise regime, we define, for all $k \in \NN$ and $j \in \{1, \ldots, m\}$, the factor $ \epsilon := 2^{-1}(a-1)$. With the assumed choice of $\gamma_j^k$ and $\epsilon$, Lemma~\ref{lem:lyapunovdecrease} implies that the expected Lyapunov function decreases, up to a summable residual: with $A_j^k := w_j^k - x_j^k + \gamma_j^k \nu_j^k$, we have
\begin{align*}
\EE\left[\Phi(z^{k+1})\right] &\leq \EE\left[\Phi(z^k)\right] - \EE\left[\frac{1}{2m} \sum_{j=1}^m \frac{1}{\gamma_{j}^k}\left(1- \frac{1+\epsilon}{ac_k}\right)
\|A_j^k\|^2\right]  \\
&\quad+ \sum_{j=1}^m\frac{\gamma_j^k\left(1 + \gamma_j^k(1+\epsilon^{-1}) \left(L_{j}+ 2L\tau m^{-1/2}\right)\right)\EE\left[\EE_k \left[\|\nu_j^k\|^2\right]\right]}{2m}. \numberthis \label{eq:lyapunovdecreaseinstantiation}
\end{align*}
Two upper bounds follow from the the definition of $\gamma_j^k$, the lower bound $c_k \geq 1$, and the straightforward inequalities $(ac_k)^{-1}(\underline{L} + 2M\tau m^{-1/2})^{-1} \geq\gamma_{j}^k \geq (ac_k)^{-1}(\overline{L} + 2M\tau m^{-1/2})^{-1}$: 
\begin{align*}
&\frac{1}{c_k}S_k \leq\frac{1}{\frac{(1-(1+\epsilon) a^{-1})}{2m a(\overline{L} + 2L\tau m^{-1/2})}}\EE\left[\frac{1}{2m} \sum_{j=1}^m \frac{1}{\gamma_{j}^k}\left(1- \frac{1+\epsilon}{ac_k}\right)
\|A_j^k\|^2\right]
\end{align*}
and
\begin{align*}
 \sum_{j=1}^m\frac{\gamma_j^k\left(1 + \gamma_j^k(1+\epsilon^{-1}) \left(L_{j} + 2L\tau m^{-1/2}\right)\right)\EE_k \left[\|\nu_j^k\|^2\right]}{2m} &\leq  \frac{(1 + (ac_k)^{-1}(1+\epsilon^{-1}))(\sigma_k^2/c_k)}{2a(\underline{L}+ 2L\tau m^{-1/2})}.
\end{align*}
Now rearrange~\eqref{eq:lyapunovdecreaseinstantiation}, use 
$\EE\left[\Phi(z^{k+1})\right] \geq \inf_{x \in \cH} \{f(x) + r(x)\}$ and $\EE\left[\Phi(z^0)\right] = f(x^0) +r(x^0)$, 
and sum~\eqref{eq:lyapunovdecreaseinstantiation} over $k$ to get 
\begin{align*}
\frac{1}{\sum_{k=0}^T c_k^{-1}}\sum_{k=0}^T\frac{1}{c_k}S_k &\quad\leq \frac{f(x^0) + r(x^0) - \inf_{x \in \cH} \{f(x) + r(x)\} + \sum_{k=0}^T\frac{(1 + (ac_k)^{-1}(1+\epsilon^{-1}))(\sigma_k^2/c_k)}{2a(\underline{L}+ 2L\tau m^{-1/2})}}{\frac{(1-(1+\epsilon)a^{-1})}{2m a(\overline{L} + 2L\tau m^{-1/2})}\sum_{k=0}^T c_k^{-1}}.
\end{align*}
The left hand side of this inequality is bounded from below by $\min_{k = 0, \ldots, T} S_k$ and is precisely the term $\EE_{k \sim P_T}\left[S_k\right]$. What remains to be shown is an upper bound on the right hand side, which we will now call $R_T$.

If the noise is summable, then $c_k \equiv 1$, so $\sum_{k=0}^T c_k^{-1} = (T+1)$ and $\sum_{k=0}^T \sigma_k^2/c_k < \infty$, which implies that $R_T = O(m(\overline{L} + 2L\tau m^{-1/2})(T+1)^{-1})$.  If the noise is $\alpha$-diminishing, then $c_k = \Theta\left(k^{(1-\alpha)}\right)$, so $\sum_{k=0}^T c_k^{-1} = \Theta((T+1)^{\alpha})$ and, because $ \sigma_k^2/c_k =O(k^{-1})$, there exists a $B > 0$ such that $\sum_{k=0}^T \sigma_k^2/c_k \leq \sum_{k=0}^T Bk^{-1} = O(\log(T+1))$, which implies that $R_T = O( (m(\overline{L} + 2L\tau m^{-1/2}) + m\log(T+1))(T+1)^{-\alpha} )$.

\section{Numerical Experiments}

In this section, we present numerical results to confirm that SAPALM delivers
the expected performance gains over PALM. We confirm two properties: 
1) SAPALM converges to values nearly as low as PALM given the same number of iterations, 
2) SAPALM exhibits a near-linear speedup as the number of workers increases.
All experiments use an Intel Xeon machine with 2 sockets and 10 cores per socket.

We use two different nonconvex matrix factorization problems to exhibit these properties,
to which we apply two different SAPALM variants: one without noise, and one with stochastic gradient noise. For each of our examples, we generate a matrix $A \in \RR^{n \times n}$ with
iid standard normal entries, where $n=2000$. 
Although SAPALM is intended for use on much larger problems, using a small problem size
makes write conflicts more likely, and so serves as an ideal setting to understand
how asynchrony affects convergence.

\begin{enumerate}
\item \textbf{Sparse PCA with Asynchronous Block Coordinate Updates.} We minimize 
\begin{align}
	\argmin_{X,Y} \frac{1}{2}||A-X^T Y||_F^2 + \lambda \|X\|_1 + \lambda \|Y\|_1,
\end{align}
where $X\in \RR^{d \times n}$ and $Y\in \RR^{d \times n}$ for some $d\in \NN$. 
We solve this problem using SAPALM with no noise $\nu^k=0$.
\item \textbf{Quadratically Regularized Firm Thresholding PCA with Asynchronous Stochastic Gradients.} We minimize 
\begin{align}
	\argmin_{X, Y} \frac{1}{2}||A-X^T Y||_F^2 + \lambda (\|X\|_\text{Firm} + \|Y\|_\text{Firm}) + \frac{\mu}{2}(\|X\|_F^2 + \|Y\|_F^2),
\end{align}
where $X\in \RR^{d \times n}$, $Y\in \RR^{d \times n}$, and $\|\cdot\|_{\text{Firm}}$ is the firm thresholding penalty proposed in~\cite{woodworth2015compressed}: 
a \textbf{nonconvex, nonsmooth} function whose proximal operator
truncates small values to zero and preserves large values.
We solve this problem using the stochastic gradient SAPALM variant from Section~\ref{eq:SAPALMvariant}.
  
\end{enumerate}

In both experiments $X$ and $Y$ are treated as coordinate blocks.
Notice that for this problem, 
the SAPALM update decouples over the entries of each coordinate block.
Each worker updates its coordinate block (say, $X$) by cycling through the coordinates of $X$ and updating each in turn, restarting at a random coordinate after each cycle.

In Figures~\eqref{fig:SPCAepochs} and~\eqref{fig:FirmIterate}, we see objective function values plotted by iteration.
By this metric, SAPALM performs as well as PALM, its single threaded variant;
for the second problem, the curves for different thread counts all overlap.
Note, in particular, that SAPALM does not diverge.
But SAPALM can add additional workers to increment the iteration counter more quickly,
as seen in Figure~\ref{fig:SPCAtime}, allowing
SAPALM to outperform its single threaded variant.

\begin{figure}
    \centering
    \begin{subfigure}[b]{0.24\textwidth}
        \includegraphics[width=\textwidth]{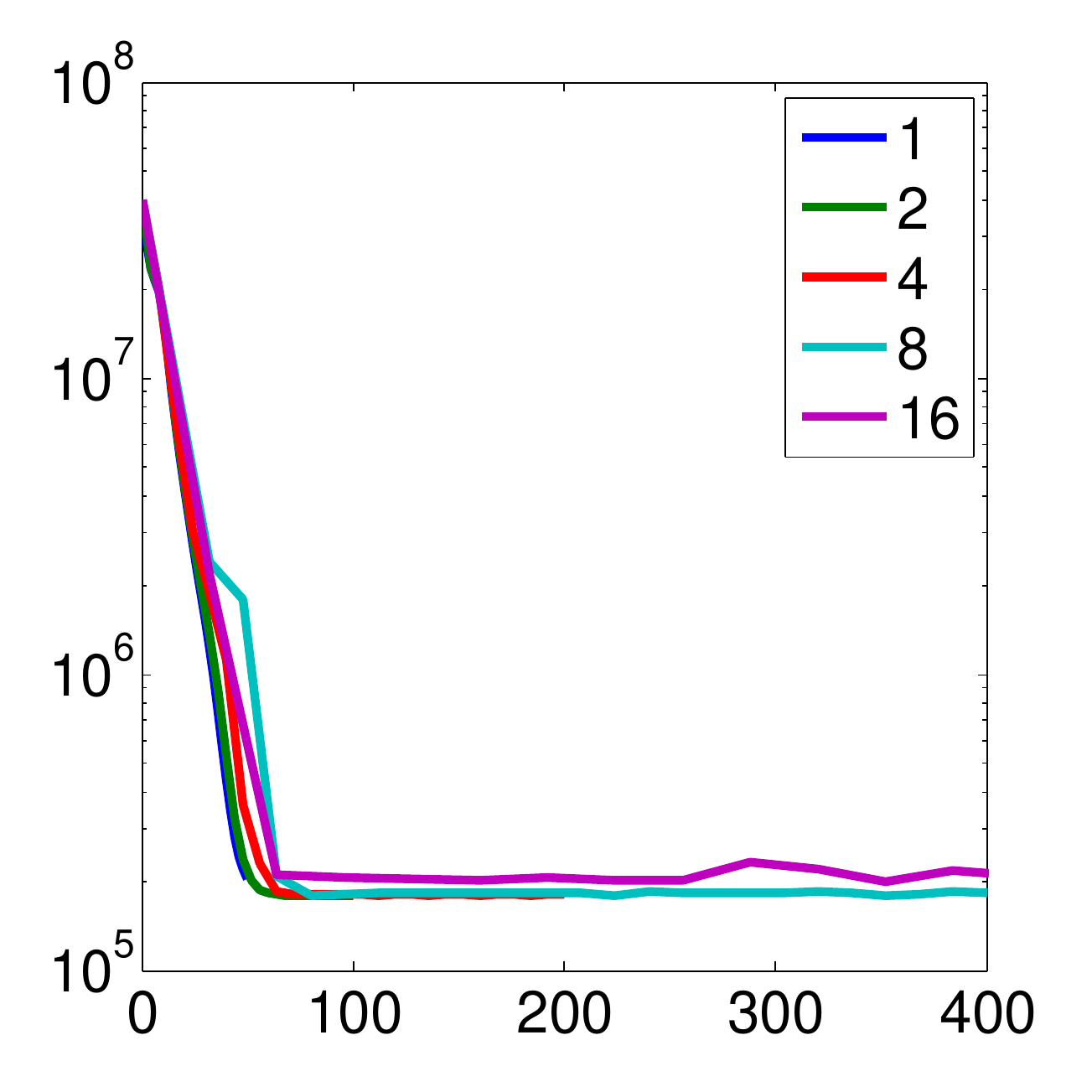}
        \caption{Iterates vs objective}
        \label{fig:SPCAepochs}
    \end{subfigure}
    \hfill 
    \begin{subfigure}[b]{0.24\textwidth}
        \includegraphics[width=\textwidth]{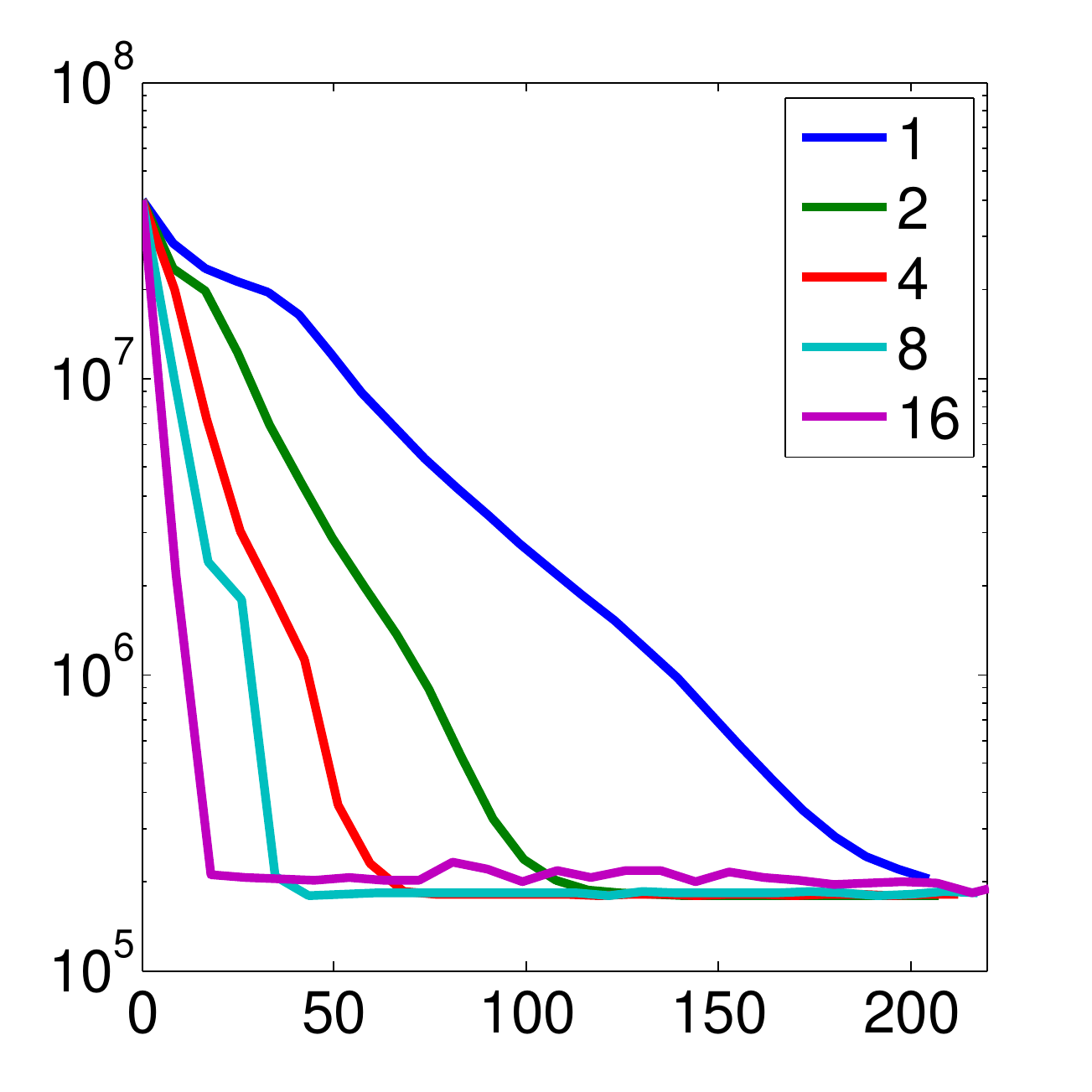}
        \caption{Time (s) vs. objective}
        \label{fig:SPCAtime}
    \end{subfigure}
    \begin{subfigure}[b]{0.24\textwidth}
        \includegraphics[width=\textwidth]{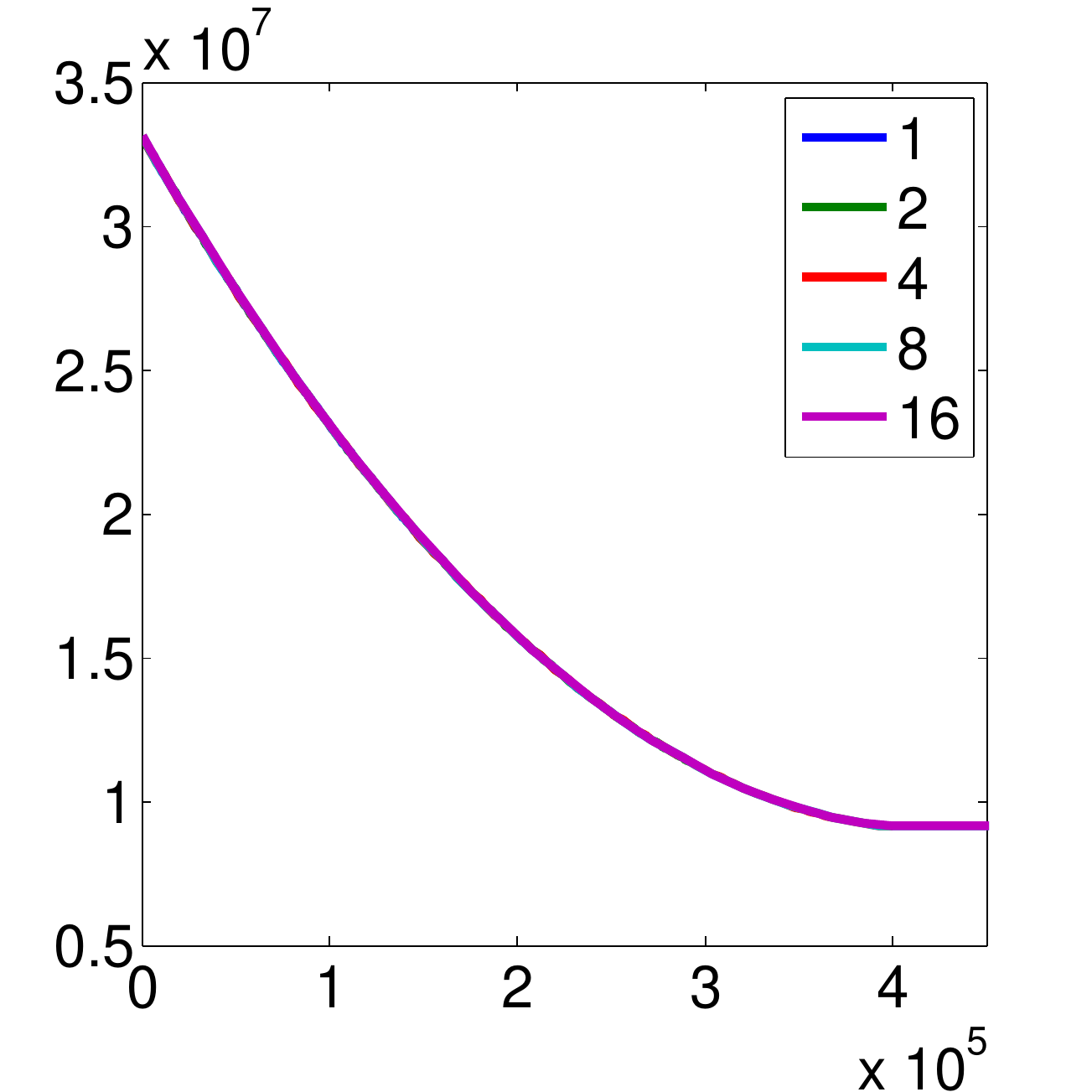}
        \caption{Iterates vs. objective}
        \label{fig:FirmIterate}
    \end{subfigure}
    \hfill 
    \begin{subfigure}[b]{0.24\textwidth}
        \includegraphics[width=\textwidth]{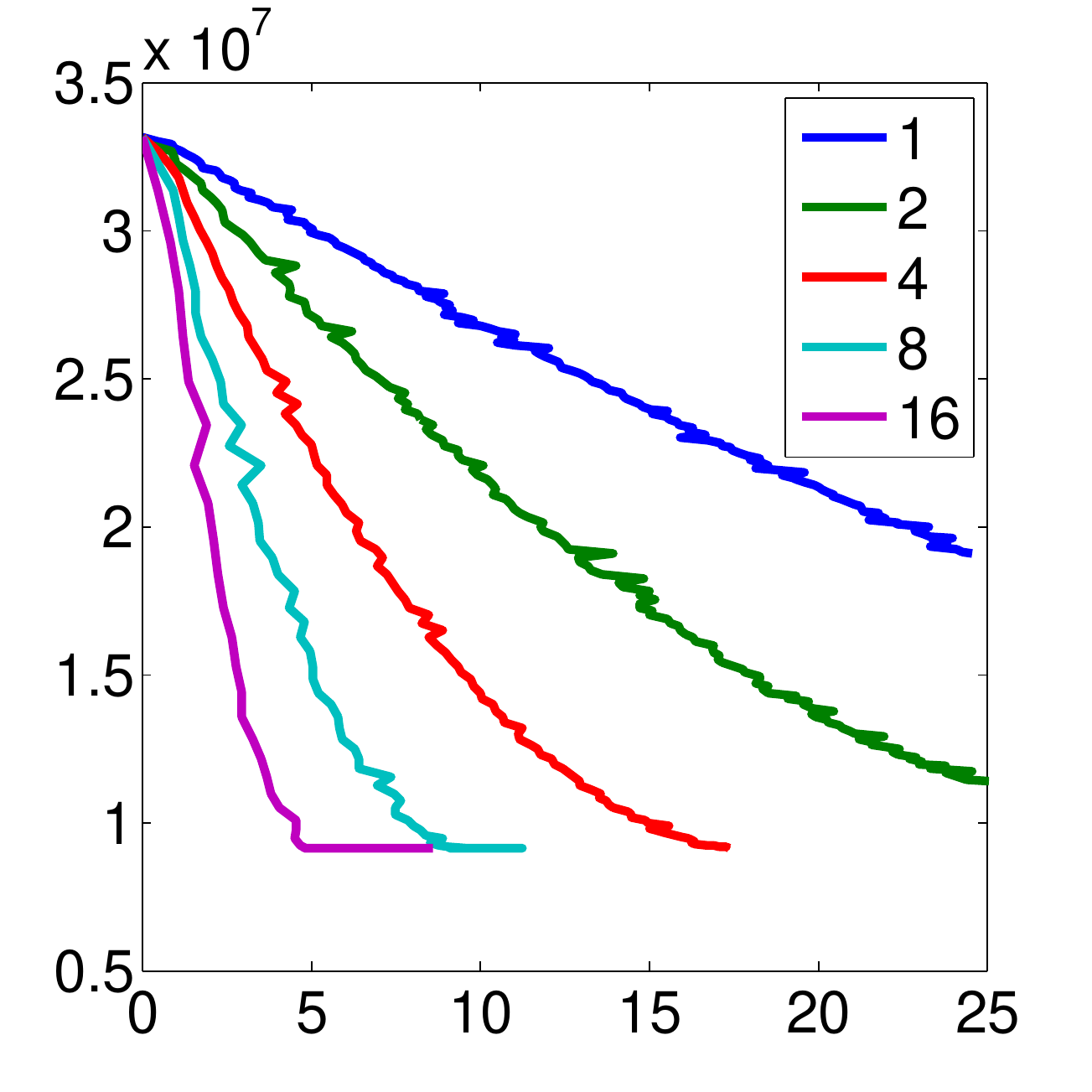}
        \caption{Time (s) vs. objective}
        \label{fig:FirmTime}
    \end{subfigure}
    \caption{Sparse PCA (\eqref{fig:SPCAepochs} and~\eqref{fig:SPCAtime}) and Firm Thresholding PCA (\eqref{fig:FirmIterate} and~\eqref{fig:FirmTime}) tests for $d=10$.
}\label{fig:FIRM}
\end{figure}

We measure the speedup $S_k(p)$ of SAPALM by the (relative) time for $p$ workers to produce $k$ iterates
\begin{equation}
S_k(p)= \frac{T_k(p)}{T_k(1)},
\end{equation}
where $T_k(p)$ is the time to produce $k$ iterates using $p$ workers.
Table~\ref{t-speedup} shows that SAPALM achieves near linear speedup for a range of variable sizes $d$.
(Dashes --- denote experiments not run.)

\begin{table}
\parbox{.45\linewidth}{
\centering
\begin{tabular}{llll}
 \toprule
\textbf{threads}&\textbf{d=10}&\textbf{d=20}&\textbf{d=100}\\\cmidrule{1-4}
1&65.9972&253.387&6144.9427\\
2&33.464&127.8973&--\\
4&17.5415&67.3267&--\\
8&9.2376&34.5614&833.5635\\
16&4.934&17.4362&416.8038\\
\bottomrule
\end{tabular}
\caption{\label{t-timing}Sparse PCA timing for 16 iterations by problem size and thread count.}
}
\hfill
\parbox{.45\linewidth}{
\centering

\begin{tabular}{llll}
 \toprule
\textbf{threads}&\textbf{d=10}&\textbf{d=20}&\textbf{d=100}\\\cmidrule{1-4}
1&1&1&1\\
2&1.9722&1.9812&--\\
4&3.7623&3.7635&--\\
8&7.1444&7.3315&7.3719\\
16&13.376&14.5322&14.743\\
\bottomrule
\end{tabular}

\caption{\label{t-speedup}Sparse PCA speedup for 16 iterations by problem size and thread count.}
}
\end{table}

Deviations from linearity can be attributed to a breakdown in the abstraction of a 
``shared memory'' computer: as each worker modifies the ``shared'' variables $X$ and $Y$,
some communication is required to maintain cache coherency across all cores and processors.
In addition, Intel Xeon processors share L3 cache between all cores on the processor.
All threads compete for the same L3 cache space, slowing down each iteration.
For small $d$, write conflicts are more likely; 
for large $d$, communication to maintain cache coherency dominates.

\section{Discussion}

A few straightforward generalizations of our work are possible; we omit them to simplify notation.

\paragraph{Removing the $\log$ factors.} The $\log$ factors in Theorem~\ref{thm:convergencerate} can easily be removed by fixing a maximum number of iterations for which we plan to run SAPALM and adjusting the $c_k$ factors accordingly, as in~\cite[Equation (3.2.10)]{opac-b1104789}.

\paragraph{Cluster points of $\{x^k\}_{k \in \NN}$.} Using the strategy employed in~\cite{davis2016asynchronous}, it's possible to show that all cluster points of $\{x^k\}_{k \in \NN}$ are (almost surely) stationary points of $ f+ r$. 

\paragraph{Weakened Assumptions on Lipschitz Constants.}
We can weaken our assumptions to allow $L_j$ to vary: we can assume
$L_j(x_1, \ldots, x_{j-1},\cdot, x_{j+1}, \ldots, x_m)$-Lipschitz continuity each partial gradient $\nabla_j f(x_1, \ldots, x_{j-1},\cdot, x_{j+1}, \ldots, x_m) : \cH_j \rightarrow \cH_j$, for every $x \in \cH$.

\section{Conclusion}

This paper presented SAPALM, the first stochastic asynchronous parallel optimization method that provably converges on a large class of nonconvex, nonsmooth problems. 
We provide a convergence theory for SAPALM, and show that with the parameters
suggested by this theory, SAPALM achieves a near linear speedup over serial PALM.
As a special case, we provide the first convergence rate for (synchronous or asynchronous)
stochastic block proximal gradient methods for nonconvex regularizers.
These results give specific guidance to ensure fast convergence of practical asynchronous methods on
a large class of important, nonconvex optimization problems, and pave the way towards
a deeper understanding of stability of these methods in the presence of noise.

\bibliographystyle{spmpsci}
\bibliography{bibliography}

\section*{Appendix}
\appendix

\section{Proof of Lemma~\ref{lem:lyapunovdecrease}}\label{sec:lyapunovdecrease}

\begin{lemma}[Lyapunov Function Supermartingale Inequality]\label{lem:Alyapunovdecrease}
For all $k \in \NN$, let $z^{k} = (x^k, \ldots, x^{k-\tau}) \in \cH^{1+\tau}$. Then for all $\epsilon \in \RR_{++}$, we have
\begin{align*}
\EE_k\left[\Phi(z^{k+1})\right] &\leq \Phi(z^k) - \frac{1}{2m} \sum_{j=1}^m \left(\frac{1}{\gamma_{j}^k} - (1+\epsilon)\left(L_{j} + \frac{2L\tau}{m^{1/2}}\right)\right) \EE_k\left[\|w_j^k - x_j^k + \gamma_j^k\nu_j^k\|^2\right] \\
&\qquad+ \sum_{j=1}^m\frac{\gamma_j^k\left(1 + \gamma_j^k(1+\epsilon^{-1}) \left(L_{j} + 2L\tau m^{-1/2}\right)\right)\EE_k \left[\|\nu_j^k\|^2\right]}{2m}
\end{align*}
where for all  $j \in \{1, \ldots, m\}$, we have $w^{k}_j \in \prox_{\gamma_{j}^k r_{j}}(x_{j}^k - \gamma_{j}^k(\nabla_{j} f(x^{k-d_k}) +  \nu_j^k))$. In particular, for $\sigma_k = 0$, we can take $\epsilon = 0$ and assume the last line is zero.
\end{lemma}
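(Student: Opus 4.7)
My plan is to compute $\Phi(z^{k+1})-\Phi(z^k)$ directly, absorb the noise cross-term into a clean $A^k_{j_k}:=w^k_{j_k}-x^k_{j_k}+\gamma^k_{j_k}\nu^k_{j_k}$ structure, and bound the delayed-gradient cross-term by Young's inequality with a parameter calibrated so that, after averaging over the random block $j_k$, the induced positive history sum exactly cancels the negative history sum inside $\Phi$. Because only block $j_k$ is updated, $\|x^{k+1}-x^k\|^2 = \|w^k_{j_k}-x^k_{j_k}\|^2$; reindexing the history sum $\sum_{h=1}^\tau (\tau-h+1)\|x(h)-x(h-1)\|^2$ in $\Phi(z^{k+1})$ by one time step produces a telescoping identity that gives
\begin{align*}
\Phi(z^{k+1})-\Phi(z^k) &= f(x^{k+1})-f(x^k)+r(x^{k+1})-r(x^k) \\
&\quad + \tfrac{L\tau}{2\sqrt{m}}\|w^k_{j_k}-x^k_{j_k}\|^2 - \tfrac{L}{2\sqrt{m}}\textstyle\sum_{h=1}^\tau \|x^{k-h+1}-x^{k-h}\|^2.
\end{align*}

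Next, I bound $f(x^{k+1})-f(x^k)$ by the block descent lemma with constant $L_{j_k}$, and $r(x^{k+1})-r(x^k) = r_{j_k}(w^k_{j_k})-r_{j_k}(x^k_{j_k})$ by the defining inequality of the prox (comparing $w^k_{j_k}$ to $x^k_{j_k}$, valid even for nonconvex $r_{j_k}$). Writing $B^k_j := \nabla_j f(x^k)-\nabla_j f(x^{k-d_k})$, the sum is upper bounded by $\langle B^k_{j_k}, w^k_{j_k}-x^k_{j_k}\rangle - \langle w^k_{j_k}-x^k_{j_k}, \nu^k_{j_k}\rangle + \frac{L_{j_k}}{2}\|w^k_{j_k}-x^k_{j_k}\|^2 - \frac{1}{2\gamma^k_{j_k}}\|w^k_{j_k}-x^k_{j_k}\|^2$. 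A direct expansion of $\|A^k_{j_k}\|^2$ shows that the noise cross-term combines with $-\frac{1}{2\gamma^k_{j_k}}\|w^k_{j_k}-x^k_{j_k}\|^2$ to yield exactly $-\frac{1}{2\gamma^k_{j_k}}\|A^k_{j_k}\|^2 + \frac{\gamma^k_{j_k}}{2}\|\nu^k_{j_k}\|^2$, isolating the stationarity measure.

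The crux is Young's inequality on the delayed-gradient cross-term with $\lambda := L\tau/\sqrt{m}$:
$$\langle B^k_{j_k}, w^k_{j_k}-x^k_{j_k}\rangle \leq \tfrac{\sqrt{m}}{2L\tau}\|B^k_{j_k}\|^2 + \tfrac{L\tau}{2\sqrt{m}}\|w^k_{j_k}-x^k_{j_k}\|^2.$$
Using $L$-Lipschitz continuity of $\nabla f$ and Cauchy-Schwarz on the block-wise telescope $x^k_j - x^{k-d_{k,j}}_j = \sum_{h=1}^{d_{k,j}}(x^{k-h+1}_j-x^{k-h}_j)$ with $d_{k,j}\leq\tau$, we obtain $\sum_j\|B^k_j\|^2\leq L^2\tau\sum_h\|x^{k-h+1}-x^{k-h}\|^2$. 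Taking $\EE_k$ introduces a $\frac{1}{m}$ factor via uniform block selection and the independence of $j_k$ from the history, converting the $\|B^k_{j_k}\|^2$ contribution into $\frac{L}{2\sqrt{m}}\sum_h\|x^{k-h+1}-x^{k-h}\|^2$, which exactly cancels the Lyapunov's negative history term. The residual coefficient on $\|w^k_{j_k}-x^k_{j_k}\|^2$ assembles to $\frac{1}{2}(L_{j_k}+2L\tau/\sqrt{m})$---namely $L_{j_k}/2$ from the descent lemma, $L\tau/(2\sqrt{m})$ from the Young's split, and $L\tau/(2\sqrt{m})$ from the Lyapunov's own positive piece. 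Relaxing $\|w^k_{j_k}-x^k_{j_k}\|^2\leq(1+\epsilon)\|A^k_{j_k}\|^2+(1+\epsilon^{-1})(\gamma^k_{j_k})^2\|\nu^k_{j_k}\|^2$ and averaging over $j_k$ then yields the coefficients stated in the lemma.

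The main obstacle is exactly this triple calibration of $\lambda=L\tau/\sqrt{m}$: it is the unique value for which the $\frac{1}{m}$-averaged delay bound generates a positive history term whose coefficient matches the Lyapunov's negative $\frac{L}{2\sqrt{m}}$, while simultaneously its reciprocal half dovetails with the Lyapunov's positive $\frac{L\tau}{2\sqrt{m}}\|w^k_{j_k}-x^k_{j_k}\|^2$ piece to assemble the $L_j+2L\tau/\sqrt{m}$ structure in the final residual. Once this choice is fixed, the remaining work is routine expansion and collection.
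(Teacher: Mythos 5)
Your proposal is correct and follows essentially the same route as the paper: block descent lemma plus the prox comparison at $x_{j_k}^k$, completion of the square to isolate $-\frac{1}{2\gamma_{j_k}^k}\|A_{j_k}^k\|^2 + \frac{\gamma_{j_k}^k}{2}\|\nu_{j_k}^k\|^2$, Young's inequality on the delayed-gradient cross term with the parameter $L\tau/\sqrt{m}$ (the paper applies Cauchy--Schwarz and Young to the full vector $w^k-x^k$ and carries the $1/m$ from the block expectation explicitly, which yields the identical $\frac{L}{2\sqrt{m}\tau}\|x^k-x^{k-d_k}\|^2 + \frac{L\tau}{2m^{3/2}}\|w^k-x^k\|^2$ split), the Jensen/telescope bound $\|x^k-x^{k-d_k}\|^2\le \tau\sum_h\|x^{k-h+1}-x^{k-h}\|^2$, and the same reindexing of the Lyapunov history sum. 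The constants you assemble match the lemma exactly, so I see no gap.
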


We first prove a descent property of the objective function---up to some residuals which are the result of asynchrony and noise:
\begin{lemma}
For all $k \in \NN$, we have
\begin{align*}
&\EE_k\left[f(x^{k+1}) + r(x^{k+1})\right] \leq f(x^k)  + r(x^k) \\
&\qquad- \frac{1}{2m} \sum_{j=1}^m \left(\frac{1}{\gamma_{j}^k} - (1+\epsilon)L_{j}\right)\EE_k\left[\|w_j^k -x^k_{j} + \nu_j^k \nu_j^k\|^2\right]\\
&\qquad+ \sum_{j=1}^m \frac{\gamma_j^k}{2m}\left(1 + (1+\epsilon^{-1})L_{j}\gamma_j^k\right) \EE_k\left[\| \nu_j^k\|^2\right]\\
&\qquad + \frac{1}{m}\EE_k\left[\dotp{\nabla f(x^k) - \nabla f(x^{k-d_k}), w^{k} -x^k} \right].
\end{align*}
\end{lemma}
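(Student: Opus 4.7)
The plan is to derive the descent inequality block by block: use the prox optimality condition of the update together with the block-wise descent lemma for $f$, then repackage the resulting noise terms into the target form involving $A_j^k = w_j^k - x_j^k + \gamma_j^k \nu_j^k$ via a completion-of-the-square identity and a single Young's inequality, and finally take the conditional expectation over the uniformly chosen block index $j_k$.

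\textbf{Step 1 (per-block descent).} Since only block $j_k$ is modified and $\nabla_{j_k} f$ is $L_{j_k}$-Lipschitz in its $j_k$-th argument, the standard descent lemma gives $f(x^{k+1}) - f(x^k) \le \langle \nabla_{j_k} f(x^k),\, w_{j_k}^k - x_{j_k}^k\rangle + \tfrac{L_{j_k}}{2}\|w_{j_k}^k - x_{j_k}^k\|^2$. Next, because $w_{j_k}^k$ minimizes the subproblem $y\mapsto r_{j_k}(y) + \langle g^k, y - x_{j_k}^k\rangle + \tfrac{1}{2\gamma_{j_k}^k}\|y-x_{j_k}^k\|^2$ with $g^k = \nabla_{j_k}f(x^{k-d_k}) + \nu_{j_k}^k$, comparing this objective at $w_{j_k}^k$ versus $x_{j_k}^k$ gives $r_{j_k}(w_{j_k}^k) - r_{j_k}(x_{j_k}^k) \le -\langle g^k, w_{j_k}^k - x_{j_k}^k\rangle - \tfrac{1}{2\gamma_{j_k}^k}\|w_{j_k}^k - x_{j_k}^k\|^2$. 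Since $r(x^{k+1}) - r(x^k) = r_{j_k}(w_{j_k}^k) - r_{j_k}(x_{j_k}^k)$, summing these and rewriting $\nabla_{j_k}f(x^k) - g^k = [\nabla_{j_k}f(x^k) - \nabla_{j_k}f(x^{k-d_k})] - \nu_{j_k}^k$ yields
\begin{align*}
f(x^{k+1}) + r(x^{k+1}) - f(x^k) - r(x^k) &\le \Bigl(\tfrac{L_{j_k}}{2} - \tfrac{1}{2\gamma_{j_k}^k}\Bigr)\|w_{j_k}^k - x_{j_k}^k\|^2 \\
&\quad + \langle \nabla_{j_k}f(x^k) - \nabla_{j_k}f(x^{k-d_k}),\, w_{j_k}^k - x_{j_k}^k\rangle \\
&\quad - \langle \nu_{j_k}^k,\, w_{j_k}^k - x_{j_k}^k\rangle.
\end{align*}

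\textbf{Step 2 (repackaging into $A_j^k$).} The key identity is the completion of the square $-\tfrac{1}{2\gamma_{j_k}^k}\|w_{j_k}^k - x_{j_k}^k\|^2 - \langle \nu_{j_k}^k, w_{j_k}^k - x_{j_k}^k\rangle = -\tfrac{1}{2\gamma_{j_k}^k}\|A_{j_k}^k\|^2 + \tfrac{\gamma_{j_k}^k}{2}\|\nu_{j_k}^k\|^2$, which absorbs the noise cross-term exactly. For the remaining positive quadratic $\tfrac{L_{j_k}}{2}\|w_{j_k}^k - x_{j_k}^k\|^2$, I apply Young's inequality to $\|A_{j_k}^k - \gamma_{j_k}^k\nu_{j_k}^k\|^2 \le (1+\epsilon)\|A_{j_k}^k\|^2 + (1+\epsilon^{-1})(\gamma_{j_k}^k)^2\|\nu_{j_k}^k\|^2$. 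Combining gives a per-selection bound whose $\|A_{j_k}^k\|^2$ coefficient is $-\tfrac{1}{2}\bigl(\tfrac{1}{\gamma_{j_k}^k} - (1+\epsilon)L_{j_k}\bigr)$ and whose $\|\nu_{j_k}^k\|^2$ coefficient is $\tfrac{\gamma_{j_k}^k}{2}\bigl(1 + (1+\epsilon^{-1})L_{j_k}\gamma_{j_k}^k\bigr)$, matching the claim.

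\textbf{Step 3 (averaging over $j_k$).} Because $j_k$ is uniform on $\{1,\dots,m\}$ and independent of the history and of $\nu^k$, taking $\EE_k[\cdot]$ converts every occurrence of a $j_k$-indexed quantity into $\tfrac{1}{m}\sum_{j=1}^m$ of the corresponding $j$-indexed quantity. The delay-mismatch inner product becomes $\tfrac{1}{m}\EE_k\langle \nabla f(x^k) - \nabla f(x^{k-d_k}), w^k - x^k\rangle$ after recognizing that $w^k - x^k$ is supported coordinate-wise on all blocks but only the selected one enters the descent. The noiseless case $\sigma_k=0$ corresponds to setting $\epsilon = 0$, rendering the Young step trivial and dropping the $\|\nu_j^k\|^2$ line.

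\textbf{Main obstacle.} The bookkeeping in Step 2 is the only delicate part: it is tempting to apply Young's inequality to both the positive and negative quadratics, which would inflate the $A_{j_k}^k$ coefficient unnecessarily. The trick is to notice that the negative quadratic and the noise cross-term combine exactly (no inequality is needed) via completion of the square, so Young's inequality is invoked only on the positive quadratic. Getting this split right is what produces the clean $(1+\epsilon),(1+\epsilon^{-1})$ constants with no residual cross-terms.
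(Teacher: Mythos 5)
Your proposal is correct and follows essentially the same route as the paper: the block descent lemma for $f$, the prox-optimality comparison at $x_{j_k}^k$, an exact completion of the square to absorb the noise cross-term into $\|A_{j_k}^k\|^2$, Young's inequality applied only to the positive $L_{j}$ quadratic, and uniform averaging over $j_k$. The only difference is cosmetic bookkeeping (you combine the two raw inequalities before reorganizing, while the paper writes the prox inequality directly in terms of $A_j^k$), and your observation about where to apply Young's inequality is exactly the point the paper's proof exploits.
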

\begin{proof}
The standard upper bound~\cite[Lemma 1.2.3]{opac-b1104789} for functions with Lipschitz continuous gradients implies that
\begin{align*}
f(x_1, \ldots, w_j^k, \ldots, x_m^k) &\leq f(x^k) + \dotp{w_j^k -x^k_{j}, \nabla f(x^k)} + \frac{L_{j}}{2}\|w_{j}^{k} -x^k_{j}\|^2. \\
&\leq f(x^k) + \dotp{w_j^k -x^k_{j}, \nabla f(x^k)} \\
&+ \frac{(1+\epsilon)L_{j}}{2}\|w_{j}^{k} -x^k_{j} + \gamma_j^k \nu_j^k\|^2 + \frac{(1+\epsilon^{-1})L_{j}}{2}\|\gamma_j^k \nu_j^k\|^2.
\end{align*}
And the definition of $w_{j}^{k}$ as a proximal point implies that 
\begin{align*}
r_{j}(w_{j}^{k}) &\leq r(x^k_{j}) - \dotp{w_{j}^{k} - x_{j}^{k} + \gamma_j^k \nu_j^k, \nabla_{j} f(x^{k-d_k}) } - \frac{1}{2\gamma_{j}^k}\|w_{j}^{k} -x^k_{j} + \gamma_j^k \nu_j^k\|^2 + \frac{1}{2\gamma_j^k}\|\gamma_j^k \nu_j^k\|^2.
\end{align*}
Given these two inequalities and the identity  $\EE_k \left[\nu^k\right] = 0$, we have
\begin{align*}
\EE_k\left[f(x^{k+1}) + r(x^{k+1})\right] &\leq \frac{1}{m} \sum_{j=1}^m f(x_1^k, \ldots, w_j^{k}, \ldots, x_m^k) +  \sum_{j=1}^m \left(\frac{1}{m} r_j(w^k_j) + \left(1-\frac{1}{m}\right)r_j(x_j^k)\right)\\
&\leq f(x^k)  + r(x^k) - \frac{1}{2m} \sum_{j=1}^m \left(\frac{1}{\gamma_{j}^k} - (1+\epsilon)L_{j}\right)\EE_k\left[\|w_j^k -x^k_{j} + \nu_j^k \nu_j^k\|^2\right]\\
&\qquad+ \sum_{j=1}^m \frac{\gamma_j^k}{2m}\left(1 + (1+\epsilon^{-1})L_{j}\gamma_j^k\right) \EE_k\left[\| \nu_j^k\|^2\right]\\
&\qquad + \frac{1}{m}\EE_k\left[\dotp{\nabla f(x^k) - \nabla f(x^{k-d_k}), w^{k} -x^k} \right].
\end{align*}
\end{proof} 

The residual due to asynchrony can be conveniently placed inside a sum that alternates up to a small noise residual: 
\begin{lemma}\label{eq:asyncterm}
For all $k \in \NN$ and any $\epsilon \in \RR_+^m$, we have
\begin{align*}
&\frac{L}{2\sqrt{m}}\sum_{h=(k+1) - \tau + 1}^{k+1}((k+1) - h +1) \EE_k\left[\|x^h - x^{h-1}\|^2\right] \\
&\leq \frac{L}{2\sqrt{m}}\sum_{h=k - \tau + 1}^{k}(k - h +1) \|x^h - x^{h-1}\|^2   -\frac{1}{m}\EE_k\left[\dotp{\nabla f(x^k) - \nabla f(x^{k-d_k}), w^{k} -x^k} \right] \\
&\quad + \frac{(1+\epsilon)2L\tau}{2m^{3/2}} \EE_k\left[\|w^k - x^k + \gamma_j^k \nu_j^k\|^2\right] + \sum_{j=1}^m\frac{(1+\epsilon^{-1})2L\tau\EE_k \left[\|\gamma_j^k\nu_j^k\|^2\right]}{2m^{3/2}}.
\end{align*}
\end{lemma}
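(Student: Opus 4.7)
The plan is to prove the lemma via three tools: a telescoping identity comparing the two weighted sums appearing in the inequality, the identity $\EE_k[\|x^{k+1}-x^k\|^2] = \frac{1}{m}\EE_k[\|w^k - x^k\|^2]$ arising from uniform random block selection, and a single Young's inequality applied to the asynchrony cross term with a parameter tuned to match the coefficients produced by the telescoping step.

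First, I would isolate the $h = k+1$ summand on the LHS; since $x^{k+1}$ differs from $x^k$ only in block $j_k$ (drawn uniformly and independently of $\nu^k$), its conditional expectation equals $\frac{1}{m}\sum_j \EE_k[\|w_j^k - x_j^k\|^2]$, while all other summands with $h \leq k$ are already $\EE_k$-measurable. Reindexing and subtracting the RHS sum term-by-term reveals a near-telescoping pattern: most coefficients align, leaving behind a negative ``old'' contribution proportional to $-\|x^{k-\tau+1} - x^{k-\tau}\|^2$, unit-coefficient residuals on the intermediate differences $\|x^h - x^{h-1}\|^2$ for $h \in \{k-\tau+2,\ldots,k\}$, and the $\frac{1}{m}\EE_k[\|w^k-x^k\|^2]$ piece originating from $h = k+1$.

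I would then control the asynchrony cross term using Cauchy--Schwarz together with the $L$-Lipschitz continuity of $\nabla f$, yielding $\EE_k[\dotp{\nabla f(x^k) - \nabla f(x^{k-d_k}), w^k - x^k}] \leq L\,\EE_k[\|x^k - x^{k-d_k}\|\cdot\|w^k-x^k\|]$. Because only one block is updated per iteration, $x^k - x^{k-d_k}$ decomposes as a sum of past per-step increments supported on individual blocks, and Cauchy--Schwarz yields the standard delay bound $\|x^k - x^{k-d_k}\|^2 \leq \tau \sum_{h=k-\tau+1}^k \|x^h - x^{h-1}\|^2$. Young's inequality with parameter $\alpha = \sqrt{m}/\tau$ then splits the cross term into a $\frac{L}{2\sqrt{m}}\sum_{h=k-\tau+1}^k \|x^h - x^{h-1}\|^2$ piece (which absorbs the unit-coefficient telescoping residuals identified above) and a $\frac{L\tau}{2m^{3/2}}\EE_k[\|w^k-x^k\|^2]$ piece.

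A final Young-type decomposition $\|w^k - x^k\|^2 \leq (1+\epsilon)\|w^k - x^k + \gamma^k\nu^k\|^2 + (1+\epsilon^{-1})\|\gamma^k\nu^k\|^2$ converts every remaining $\|w^k - x^k\|^2$ contribution --- both the $h=k+1$ summand and the one produced by the Young bound on the cross term --- into the noise residuals stated on the lemma's right-hand side. The most delicate step is the bookkeeping around $\alpha = \sqrt{m}/\tau$: this specific parameter is forced by the need to produce exactly the $\frac{L}{2\sqrt{m}}$ coefficient that cancels the telescoping residuals, and verifying that the two $\|w^k - x^k\|^2$ contributions combine to yield precisely the factor $\frac{2L\tau}{m^{3/2}}$ appearing in the lemma (rather than some smaller constant) is what fixes the noise coefficients in their stated form.
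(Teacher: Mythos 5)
Your proposal is correct and follows essentially the same route as the paper's proof: Cauchy--Schwarz with the Lipschitz bound on $\nabla f$, Young's inequality with the parameter $\sqrt{m}/\tau$, the Jensen-type delay bound $\|x^k - x^{k-d_k}\|^2 \leq \tau\sum_{h=k-\tau+1}^{k}\|x^h - x^{h-1}\|^2$, the telescoping of the weighted history sums, the identity $\EE_k[\|x^{k+1}-x^k\|^2] = m^{-1}\EE_k[\|w^k-x^k\|^2]$, and the final $(1+\epsilon)$/$(1+\epsilon^{-1})$ noise split. The only quibble is cosmetic: the two $\|w^k-x^k\|^2$ contributions each carry the coefficient $\frac{L\tau}{2m^{3/2}}$ and so combine to $\frac{2L\tau}{2m^{3/2}}$ (the form displayed in the lemma), not $\frac{2L\tau}{m^{3/2}}$ as written in your last paragraph.
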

\begin{proof}
The asynchronous term splits into the sum of two alternating terms and a third easily handled term:\footnote{we use the same bound presented in~\cite[Theorem 4.1]{davis2016asynchronous}, but we reproduce it for completeness.} for all $C > 0$, we have
\begin{align*}
&\frac{1}{m}\EE_k\left[\dotp{\nabla f(x^k) - \nabla f(x^{k-d_k}), w^{k} -x^k} \right]\\
&\leq \frac{1}{m}\EE_k\left[L\|x^k - x^{k-d_k}\|\|w^{k} - x^k\| \right] \\\qquad 
&\leq \EE_k\left[\frac{L}{2\sqrt{m}\tau} \|x^k - x^{k-d_k}\|^2 + \frac{L\tau}{2m^{3/2}}\|w^{k} - x^k\|^2\right] \\
&\leq   \EE_k\left[\frac{L}{2\tau\sqrt{m}}\sum_{j=1}^md_{k,j} \sum_{h=k-d_{k,j}+1}^{k}\|x_j^{h} - x_j^{h-1}\|^2 + \frac{L\tau}{2m^{3/2}}\|w^{k} - x^k\|^2\right] \qquad \text{(by Jensen's inequality)} \\
&\leq  \EE_k\left[\frac{L}{2\sqrt{m}}\sum_{j=1}^m\sum_{h=k-\tau+1}^{k}\|x_j^{h} - x_j^{h-1}\|^2  + \frac{L\tau}{2m^{3/2}}\|w^{k} - x^k\|^2 \right]\\
&=  \EE_k\biggl[\left(\frac{L}{2\sqrt{m}}\sum_{h=k-\tau+1}^{k}(h-k+\tau)\|x^{h} - x^{h-1}\|^2 - \frac{L}{2\sqrt{m}}\sum_{h=k-\tau+2}^{k+1}(h-(k+1)+\tau)\|x^{h} - x^{h-1}\|^2 \right)\\
&\hspace{20pt} + \frac{L\tau}{2\sqrt{m}}\|x^{k+1} - x^k\|^2 + \frac{L\tau}{2m^{3/2}}\|w^{k} - x^k\|^2\biggr].
\end{align*}
The proof is completed by noticing that $\EE_k\left[\|x^{k+1} - x^k\|^2\right] = m^{-1}\EE_k\left[\|w^k - x^k\|^2\right]$, combining the two terms on the last line, and using the following inequality: 
\begin{align*}
\|w^{k} - x^k\|^2 \leq \sum_{j=1}^m(1+ \epsilon)\|w_j^k - x_k^k + \gamma_j^k \nu_j^k\|^2 + \sum_{j=1}^m(1+ \epsilon^{-1})\| \gamma_j^k \nu_j^k\|^2.
\end{align*}
\end{proof}
Summing up the bounds in the Lemmas, we obtain the claimed decrease in the Lyapunov function.

\section{Relaxed Assumptions on the Variance When $r \equiv 0$}

It's easy to modify the Lyapunov function in the case that $r \equiv 0$ to the following form:
\begin{lemma}[Lyapunov Function Supermartingale Inequality]\label{lem:lyapunovdecreasesmoothonly}
For all $k \in \NN$, let $z^{k} = (x^k, \ldots, x^{k-\tau}) \in \cH^{1+\tau}$. Then for all $\epsilon \in \RR_{++}$, we have
\begin{align*}
\EE_k\left[\Phi(z^{k+1})\right] &\leq \Phi(z^k) - \frac{1}{2m} \sum_{j=1}^m \gamma_j^k\left(2 - (1+\epsilon)\left(L_{j} + \frac{2L\tau}{m^{1/2}}\right)\gamma_j^k\right)\|\nabla_j f(x^{k-d_k})\|^2 \\
&\qquad+ \sum_{j=1}^m\frac{(\gamma_j^k)^2(1+\epsilon^{-1}) \left(L_{j} + 2L\tau m^{-1/2}\right)\EE_k \left[\|\nu_j^k\|^2\right]}{2m}.
\end{align*}
In particular, for $\sigma_k = 0$, we can take $\epsilon = 0$ and assume the last line is zero.
\end{lemma}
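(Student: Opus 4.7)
The plan is to mirror the proof of Lemma~\ref{lem:Alyapunovdecrease}, but wherever the nonsmooth argument invoked the proximal optimality inequality for $w_j^k$, I will instead use the explicit identity $w_j^k = x_j^k - \gamma_j^k(\nabla_j f(x^{k-d_k}) + \nu_j^k)$ that is valid when $r \equiv 0$. The payoff is the coefficient ``$2$'' (rather than ``$1$'') in front of $\gamma_j^k\|\nabla_j f(x^{k-d_k})\|^2$: in the nonsmooth proof, the proximal inequality contributes a $-\tfrac{1}{2\gamma_j^k}\|w_j^k - x_j^k + \gamma_j^k\nu_j^k\|^2$ term that is used to cancel the $\tfrac{L_j}{2}\|\cdot\|^2$ from the descent lemma; here that cancellation never happens and the smoothness quadratic stands alone, improving the leading coefficient.

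\paragraph{Step 1: Block descent with an explicit update.} First I would apply the standard descent inequality to the $j$th block,
\[
f(x^k_1,\ldots,x^{k+1}_j,\ldots,x^k_m) \le f(x^k) + \langle \nabla_j f(x^k),\ x_j^{k+1}-x_j^k\rangle + \tfrac{L_j}{2}\|x_j^{k+1}-x_j^k\|^2,
\]
substitute $x_j^{k+1}-x_j^k = -\gamma_j^k(\nabla_j f(x^{k-d_k}) + \nu_j^k)$ on the event $\{j_k=j\}$, average over $j_k$ (uniform on $\{1,\dots,m\}$), and take conditional expectation. Because $\EE_k[\nu^k]=0$, the cross term with $\nabla_j f(x^k)$ kills the noise contribution and leaves $-\tfrac{1}{m}\sum_j \gamma_j^k\langle \nabla_j f(x^k),\nabla_j f(x^{k-d_k})\rangle$. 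Applying Young's inequality with parameter $\epsilon$ to $\|\nabla_j f(x^{k-d_k})+\nu_j^k\|^2$ in the quadratic term splits it cleanly into an $\|\nabla_j f(x^{k-d_k})\|^2$ part with factor $(1+\epsilon)$ and an $\EE_k[\|\nu_j^k\|^2]$ part with factor $(1+\epsilon^{-1})$.

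\paragraph{Step 2: Peel off the asynchrony residual and invoke Lemma~\ref{eq:asyncterm}.} Next I would decompose
\[
-\gamma_j^k\langle \nabla_j f(x^k),\nabla_j f(x^{k-d_k})\rangle = -\gamma_j^k\|\nabla_j f(x^{k-d_k})\|^2 + \langle \nabla_j f(x^k) - \nabla_j f(x^{k-d_k}),\ w_j^k - x_j^k + \gamma_j^k \nu_j^k\rangle,
\]
using $-\gamma_j^k\nabla_j f(x^{k-d_k}) = w_j^k - x_j^k + \gamma_j^k\nu_j^k$. Combined with the $(1+\epsilon)L_j(\gamma_j^k)^2/2$ quadratic surviving from Step 1, the leading term yields the sharp $-\tfrac{\gamma_j^k}{2m}(2 - (1+\epsilon)L_j\gamma_j^k)\|\nabla_j f(x^{k-d_k})\|^2$ contribution. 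The remainder is exactly the asynchrony term controlled by Lemma~\ref{eq:asyncterm}; applying that lemma absorbs it into $H_k - \EE_k[H_{k+1}]$, where $H_k := \tfrac{L}{2\sqrt{m}}\sum_{h=1}^{\tau}(\tau-h+1)\|x^{k-h+1}-x^{k-h}\|^2$ is the history part of $\Phi(z^k)$, plus residuals $\tfrac{(1+\epsilon)2L\tau}{2m^{3/2}}\EE_k[\|w^k-x^k+\gamma^k\nu^k\|^2]$ and $\tfrac{(1+\epsilon^{-1})2L\tau}{2m^{3/2}}\EE_k[\|\gamma^k\nu^k\|^2]$.

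\paragraph{Step 3: Collect coefficients.} Finally I would substitute the smooth-case identities $\|w^k-x^k+\gamma^k\nu^k\|^2 = \sum_j (\gamma_j^k)^2\|\nabla_j f(x^{k-d_k})\|^2$ and $\|\gamma^k\nu^k\|^2 = \sum_j (\gamma_j^k)^2\|\nu_j^k\|^2$ into these residuals, add the function-value descent from Steps~1--2 to the $\EE_k[H_{k+1}] - H_k$ bound, and collect like terms: the $\|\nabla_j f(x^{k-d_k})\|^2$ coefficients combine into $-\tfrac{\gamma_j^k}{2m}(2 - (1+\epsilon)(L_j + 2L\tau/\sqrt{m})\gamma_j^k)$ and the $\EE_k[\|\nu_j^k\|^2]$ coefficients into $\tfrac{(\gamma_j^k)^2(1+\epsilon^{-1})(L_j + 2L\tau/\sqrt{m})}{2m}$. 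The main obstacle is keeping the constants straight so that the $L_j$ contribution from smoothness and the $2L\tau/\sqrt{m}$ contribution from asynchrony combine additively inside a single factor $(L_j + 2L\tau/\sqrt{m})$ bearing identical $(1+\epsilon)$ and $(1+\epsilon^{-1})$ multipliers; this is exactly why Young's inequality must be applied with the same $\epsilon$ in Steps~1 and~2. The case $\sigma_k=0$, $\epsilon=0$ then follows because Young's inequality is never needed when $\nu^k\equiv 0$.
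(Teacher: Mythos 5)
Your proposal is correct and follows essentially the same route as the paper's appendix proof: apply the block descent lemma, exploit the explicit update $w_j^k - x_j^k + \gamma_j^k\nu_j^k = -\gamma_j^k\nabla_j f(x^{k-d_k})$ (valid since the prox of $r_j\equiv 0$ is the identity) to obtain the leading coefficient $2$, split off the asynchrony cross term and absorb it via Lemma~\ref{eq:asyncterm}, and collect coefficients. Your accounting of the $(1+\epsilon)$ and $(1+\epsilon^{-1})$ factors and of the $2L\tau m^{-1/2}$ contributions matches the paper's.
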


Key to this inequality is that, at each iteration, the noise variance is multiplied by$(\gamma_j^k)^2$, rather than by $\gamma_j^k$. Following the proof of Theorem~1 yields the following theorem in the case that $r \equiv 0$:

\begin{theorem}[SAPALM Convergence Rates ($r \equiv 0$)]\label{thm:Aconvergencerate_smooth}
Let $\{x^k\}_{k \in \NN} \subseteq \cH$ be the SAPALM sequence created by Algorithm~1 in the case that $r \equiv 0$. If, for all $k \in \NN$, $\{\EE_k\left[\|\nu^k\|^2\right]\}_{k \in \NN}$ is bounded (not necessarily diminishing), and
$$
\left(\exists a \in (1, \infty)\right),\left(\forall k \in \NN\right), \left(\forall j \in \{1, \ldots, m\}\right) \qquad \gamma_{j}^k := \frac{1}{a\sqrt{k}(L_{j} + 2M\tau m^{-1/2})},
$$
then for all $T \in \NN$, we have
 $$\min_{k = 0, \ldots, T} S_k \leq \EE_{k \sim P_T}\left[S_k\right] = O\left(\frac{m(\overline{L} + 2L\tau m^{-1/2}) + m\log(T+1)}{\sqrt{T+1}}\right),$$ 
 where $P_T$ is the distribution $\{0, \ldots, T\}$ such that $P_T(X = k) \propto k^{-1/2}$.
\end{theorem}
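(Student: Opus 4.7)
The plan is to mirror the proof strategy of Theorem~\ref{thm:convergencerate}, but to first upgrade Lemma~\ref{lem:lyapunovdecrease} to exploit the collapse of the prox-step when $r \equiv 0$. Concretely, the prox reduces to a plain noisy gradient step $w_j^k = x_j^k - \gamma_j^k(\nabla_j f(x^{k-d_k}) + \nu_j^k)$, so the quantity $w_j^k - x_j^k + \gamma_j^k \nu_j^k$ that appears in Lemma~\ref{lem:lyapunovdecrease} equals $-\gamma_j^k \nabla_j f(x^{k-d_k})$. Substituting this identity into every appearance of $\|w_j^k - x_j^k + \gamma_j^k \nu_j^k\|^2$ (including inside the asynchrony bound of Lemma~\ref{eq:asyncterm}, where the same square shows up after Young's inequality) converts that term into $(\gamma_j^k)^2 \|\nabla_j f(x^{k-d_k})\|^2$. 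After factoring one copy of $\gamma_j^k$ back out, the Lyapunov inequality takes the form asserted in Lemma~\ref{lem:lyapunovdecreasesmoothonly}: the descent coefficient of $\|\nabla_j f\|^2$ is $\gamma_j^k(2 - (1+\epsilon)(L_j + 2L\tau/\sqrt{m})\gamma_j^k)/(2m)$, while the noise term now carries $(\gamma_j^k)^2$ instead of $\gamma_j^k$.

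Second, I would specialize with $\epsilon := (a-1)/2$ and the stated stepsize $\gamma_j^k = 1/(a\sqrt{k}(L_j + 2L\tau/\sqrt{m}))$. Since $a\sqrt{k}\geq 1$, the bracketed factor $2 - (1+\epsilon)(L_j + 2L\tau/\sqrt{m})\gamma_j^k$ is bounded below by the positive constant $2 - (1+\epsilon)/a = (3 - a^{-1} - 1)/\cdots > 0$, so the coefficient of $\|\nabla_j f\|^2$ is $\Theta(\gamma_j^k) = \Theta(1/\sqrt{k})$, while the noise coefficient $(\gamma_j^k)^2(L_j + 2L\tau/\sqrt{m}) = \Theta(1/k)$. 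Using the identity (valid for $r\equiv 0$) that $S_k = \EE\bigl[\sum_j \|\nabla_j f(x^{k-d_k})\|^2\bigr]$, the per-step Lyapunov drop controls $(\gamma_j^k/m)\,S_k$ up to an $O(1/k)$ noise residual.

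Third, I take total expectations, telescope from $k=0$ to $T$, use $\EE[\Phi(z^{T+1})] \geq \inf_x f(x)$ and $\EE[\Phi(z^0)] = f(x^0)$, and divide by $\sum_{k=0}^T c_k^{-1} = \sum_{k=0}^T k^{-1/2} = \Theta(\sqrt{T+1})$. The result is the bound
\[
\EE_{k \sim P_T}[S_k] \;\leq\; C\cdot \frac{m(\overline{L} + 2L\tau m^{-1/2})\bigl(f(x^0)-\inf f\bigr) + \sup_k \EE\|\nu^k\|^2\sum_{k=1}^T k^{-1}}{\sqrt{T+1}},
\]
with $P_T(X=k) \propto k^{-1/2}$ arising naturally from weighting by $\gamma_j^k$. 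Because $\sum_{k=1}^T k^{-1} = O(\log(T+1))$, this gives the claimed rate, and $\min_{k\leq T} S_k \leq \EE_{k\sim P_T}[S_k]$ is immediate.

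The main obstacle is the bookkeeping in the asynchrony cross term: Lemma~\ref{eq:asyncterm} splits $\|w^k-x^k\|^2$ via the $(1+\epsilon),(1+\epsilon^{-1})$ inequality into one piece matching $\|w_j^k - x_j^k + \gamma_j^k\nu_j^k\|^2$ (which in the smooth case becomes $(\gamma_j^k)^2\|\nabla_j f\|^2$ and merges into the descent coefficient) and one piece of the form $(1+\epsilon^{-1})\,2L\tau\|\gamma_j^k\nu_j^k\|^2/(2m^{3/2})$ (which is already $(\gamma_j^k)^2$-scaled and thus compatible with the sharpened noise term). Verifying that \emph{every} noise contribution, including the one routed through asynchrony, carries the full $(\gamma_j^k)^2$ factor is the delicate step; once it is in place, the weakened assumption $\sup_k \EE\|\nu^k\|^2 < \infty$ suffices because $\sum (\gamma_j^k)^2$ is summable against any bounded sequence with only a logarithmic penalty, and the rest of the argument reproduces the summable case of Theorem~\ref{thm:convergencerate} verbatim.
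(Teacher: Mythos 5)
Your proposal is correct and follows essentially the same route as the paper: you first sharpen the Lyapunov inequality for $r \equiv 0$ by using the identity $w_j^k - x_j^k + \gamma_j^k\nu_j^k = -\gamma_j^k\nabla_j f(x^{k-d_k})$, so that every noise contribution (including the one routed through the asynchrony term) carries a factor of $(\gamma_j^k)^2$ rather than $\gamma_j^k$, and then you rerun the telescoping argument of Theorem~\ref{thm:convergencerate} with $c_k = \sqrt{k}$. This is precisely Lemma~\ref{lem:lyapunovdecreasesmoothonly} followed by the paper's ``follow the proof of Theorem~1'' step, and your identification of the $(\gamma_j^k)^2$-scaling of the noise as the reason merely bounded variance suffices is exactly the paper's key observation.
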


Now for the decrease of the Lyapunov function:

\begin{proof}[Proof of Lemma~\ref{lem:lyapunovdecreasesmoothonly}]
The standard upper bound~\cite[Lemma 1.2.3]{opac-b1104789} for functions with Lipschitz continuous gradients implies that
\begin{align*}
f(x_1, \ldots, w_j^k, \ldots, x_m^k) &= f(x^k) + \dotp{w_j^k -x^k_{j}, \nabla_j f(x^k)} + \frac{L_{j}}{2}\|w_{j}^{k} -x^k_{j}\|^2. \\
&\leq f(x^k) + \dotp{w_j^k -x^k_{j}, \nabla_j f(x^k)} \\
&+ \frac{(1+\epsilon)L_{j}}{2}\|w_{j}^{k} -x^k_{j} + \gamma_j^k \nu_j^k\|^2 + \frac{(1+\epsilon^{-1})L_{j}}{2}\|\gamma_j^k \nu_j^k\|^2.
\end{align*}
The inner product term can be split into two further pieces
\begin{align*}
\EE_k\left[\dotp{w_j^k -x^k_{j}, \nabla_j f(x^k)}\right] &= \EE_k\left[\dotp{w_j^k -x^k_{j} + \gamma_j^k\nu_j^k, \nabla_j f(x^{k-d_k})}\right] + \EE_k\left[\dotp{w_j^k -x^k_{j}, \nabla_j f(x^{k}) - \nabla_j f(x^{k-d_k})}\right],
\end{align*}
where we've use the equality $\EE_k\left[\nu_k\right] = 0$. Thus, owing to the equality $w_{j}^{k} -x^k_{j} + \gamma_j^k \nu_j^k = -\gamma_j^k \nabla_j f(x^{k-d_k})$, we have
\begin{align*}
\EE_k\left[f(x^{k+1}) \right] &\leq \frac{1}{m} \sum_{j=1}^m f(x_1^k, \ldots, w_j^{k}, \ldots, x_m^k) \\
&\leq f(x^k)  -  \sum_{j=1}^m  \frac{\gamma_j^k(2 -  (1+\epsilon)L_{j}\gamma_j^k)}{2m}\|\nabla_j f(x^{k-d_k})\|^2\\
&\qquad+ \sum_{j=1}^m \frac{(1+\epsilon^{-1})L_{j}}{2m}\|\gamma_j^k \nu_j^k\|^2.\\
&\qquad + \frac{1}{m}\EE_k\left[\dotp{\nabla f(x^k) - \nabla f(x^{k-d_k}), w^{k} -x^k} \right].
\end{align*}

The proof finished by combining this inequality with the inequality in Lemma~\ref{eq:asyncterm}.

\end{proof}

\end{document}